\newtheorem{theorem}{Theorem}[section]
\newtheorem{lemma}[theorem]{Lemma}
\newtheorem{cor}[theorem]{Corollary}
\theoremstyle{definition}
\newtheorem{definition}[theorem]{Definition}
\newtheorem{example}[theorem]{Example}
\newtheorem{remark}[theorem]{Remark}
\numberwithin{equation}{section}
\begin{document}

\date{\today}
\title{Banach Spaces of GLT Sequences and Function Spaces}
\author{V. B. Kiran Kumar}
 \email{vbk@cusat.ac.in} 
\author{ Rahul Rajan}
 \email{rajan.rahul48@yahoo.com} 
 \author{ N. S. Sarath Kumar}
 \email{sarathkumarns05@gmail.com} 
\address{Department of Mathematics, Cochin University of Science And Technology, Kerala, India.}

\begin{abstract}
       The Generalized Locally Toeplitz (GLT) sequences of matrices have been originated from the study of certain partial differential equations. To be more precise, such matrix sequences arise when we numerically approximate some partial differential equations by discretization. The study of the asymptotic spectral behaviour of GLT sequence is very important in analysing the solution of corresponding partial differential equations. The approximating classes of sequences (a.c.s) and the spectral symbols are important notions in this connection. Recently, G. Barbarino obtained some additional results regarding the theoretical aspects of such notions. He obtained the completeness of the space of matrix sequences with respect to pseudo metric a.c.s. Also, he identified the space of GLT sequences with the space of measurable functions. In this article, we follow the same research line and obtain various results connecting the sub-algebras of matrix sequence spaces and sub-algebras of function spaces. In some cases, these are  identifications as Banach spaces and some of them are Banach algebra identifications. In the process, we also prove that the convergence notions in the sense of eigenvalue/singular value clustering are  equivalent to the convergence with respect to the metrics introduced here. These convergence notions are related to the study of preconditioners in the case of matrix/operator sequences. Finally, as an application of our main results, we establish a Korovkin-type result in the setting of GLT sequences.
\end{abstract}
\maketitle

	\section{Introduction and Preliminaries}
	The correspondence between matrix sequences and measurable functions is very natural in many important examples such as the Toeplitz matrices. Here the spectral information of the operator/matrix sequence is stored in the corresponding symbol-function (recall the celebrated Szeg\"o distribution theorem \cite{szego}). 
	Also, such matrix sequences arise naturally in the study of partial differential equations with certain boundary conditions, using the finite difference approximation.
	
	For example, if we consider the Schr\"odinger operator that maps $f\mapsto -f^{''}+v.f$, where $v$ is a real-valued periodic potential function, then the corresponding finite difference approximation leads to a sequence of block Toeplitz matrices. If we consider more general PDE's like those arise from diffusion problem ($f\mapsto (-af^{'})^{'}+v.f$) or convection-diffusion-reaction  ($f\mapsto (-af^{'})^{'}+bf^{'}+v.f$), we end up with Locally Toeplitz (LT) or Generalized Locally Toeplitz (GLT) sequences \cite{stefano}. In most of the cases, we see that the sequence of discretization matrices $\{A_n\}_n$ enjoys an asymptotic spectral distribution. This is somehow related to
	the spectrum of the differential operator associated with the considered PDE. Asymptotic singular value distribution is defined below
	\begin{definition}
		We say that $\{A_n\}_n$ has an asymptotic singular value distribution with symbol $f$ and  write
		$\{A_n\}_n\sim_{\sigma} f$ , if, for all $F\in C_C(\mathbb{R})$
		$$ \lim _{n\to\infty} \frac{1}{n}\sum_{i=1}^{n}F(\sigma_i(A_n)) = \frac{1}{2\pi}\int_D F(|f(x)|) dx. $$
	\end{definition} 
	
	Because of this inherent connection between the matrix sequences and the corresponding symbol-functions, many researchers explored the possible generalizations of such results. They tried to obtain symbol-functions or function spaces corresponding to some class of matrix sequences to understand the spectral asymptotic. Recently, such studies have been initiated in the setting of GLT sequences by researchers like Albrecht B\"ottcher, Stefano Serra-Capizzano, G. Barbarino, C. Garoni, etc. \cite{barbarino,garoni,stefano1,bottcher,stefano}.\\
	An equivalence between GLT sequences and measurable functions was obtained in \cite{barbarino}. In this article, we follow  the same research line and obtain various results connecting the sub-algebras of the space of all matrix sequences and the sub-algebras of  space of measurable functions. In some cases, these are  identifications as Banach spaces and some of them are Banach algebra identifications. These spaces of matrix sequences are defined using various pseudo-metric functions introduced in this article. These notions are motivated from the  pseudo-metric induced by the notion of approximating class of sequences (a.c.s) used in \cite{stefano}. As in \cite{stefano}, we also obtain characterizations  of  convergence notions in the sense of eigenvalue/singular value clustering here (these notions were originated from  the preconditioning problems in numerical linear algebra).\\
	Now we list down some definitions which are useful throughout this article.
	\begin{definition}
		Let $\lbrace A_{n}\rbrace _{n} $ be a matrix-sequence and $\lbrace\lbrace B_{n,m} \rbrace_n\rbrace_m$ a sequence of matrix-sequences. We say that $\lbrace\lbrace B_{n,m} \rbrace_n\rbrace_m$ is an \textit{approximating class of sequences} ${(a.c.s)}$ for  $\lbrace A_{n}\rbrace _{n}$ if the following condition is met: for every $m$ there exists an $n_m$ such that, for $n \geq n_m$,
		\begin{equation*}\label{eu_eqn}
			A_n= B_{n,m}+R_{n,m}+N_{n,m}
		\end{equation*}
		$\text{rank} (R_{n,m}) \leq c(m)n,\quad \| N_{n,m} \| \leq \omega(m) $ 
		where $\parallel.\parallel$ is the spectral norm,  $ n_m, c(m)$ and $\omega(m)$ depend only on $m$ and 
		$$\lim_{m\to\infty} c(m) = \lim_{m\to\infty} \omega(m)=0.$$
	\end{definition}	
	The notion of approximating classes of sequences  is a powerful tool in numerical Linear Algebra literature. Using this , we can replace a complicated matrix sequence $\{A_n\}$ by some simpler matrix sequences $\{\{B_{n,m}\}_n\}_m$. The asymptotic distribution of singular values/eigenvalues of $\{\{B_{n,m}\}_n\}_m$ can be used to compute the asymptotic distribution of singular values/eigenvalues of $\{A_n\}_n$.
	
	\begin{definition}\label{GLT defn}
		\normalfont Let $ a:[0,1]\rightarrow \mathbb{C}$ be a Riemann-integrable function and $f \in L^1{([-\pi,\pi])}$. We say that a matrix sequence $\lbrace A_{n}\rbrace _{n}$ is a \textit{Locally Toeplitz (LT) sequence} with symbol $a \otimes f$, and we write $\lbrace A_{n}\rbrace _{n}\sim_{\mathrm{LT}} a\otimes f$, if $\lbrace \lbrace LT_{n}^{m}(a,f) \rbrace_n\rbrace_{m \in \mathbb{N}}$ is an a.c.s for $\lbrace A_{n}\rbrace _{n}$, where
		\begin{equation*}
			\begin{aligned}
				LT_{n}^{m}(a,f)&=[D_m(a) \otimes T_{\lfloor n/m \rfloor}(f)]\oplus O_{n (mod\, m)}\\
				&=\underset{i=1,...,n}{\mathrm{diag}}[a(\frac{i}{m})T_{\lfloor n/m \rfloor }(f)]\oplus O_{n (mod\, m)}.
			\end{aligned}
		\end{equation*}
		Here $D_n(a)$  is a  $(n \times n)$ diagonal matrix associated with $a$ given by
		$$D_n(a)=\underset{i=1,...,n}{\mathrm{diag}} a(\frac{i}{n}).$$
		Let  $\kappa:[0,1] \times [-\pi,\pi] \rightarrow \mathbb{C}$ be a measurable function. We say that a matrix sequence $\lbrace A_{n}\rbrace _{n}$  is a \textit{Generalized  Locally Toeplitz (GLT) sequence} with symbol $\kappa$, and we write $\lbrace A_{n}\rbrace _{n}\sim_{\mathrm{GLT}} \kappa$, if the following condition is met.\\
		For every $m$ varying in some infinite subset of $\mathbb{N} $ there exists a  finite number of LT sequence $\lbrace A_{n}^{(i,m)}\rbrace _{n} \sim_{\mathrm{LT}} a_{i,m}\otimes f_{i,m},i=1,...,k_m ,$ such that:
		\begin{itemize}
			\item $\sum \limits_{i=1}^{k_m} a_{i,m} \otimes f_{i,m}\rightarrow \kappa $ in measure over $[0,1] \times [-\pi,\pi]$ when $m \rightarrow  \infty$;
			\item $\lbrace\lbrace \sum \limits_{i=1}^{k_m} A_{n}^{(i,m)}\rbrace _{n}\rbrace_m $ is an a.c.s for $\lbrace A_{n}\rbrace _{n}$.
		\end{itemize}
	\end{definition}
	In this article, the results presented are for $D=[0,1]\times[-\pi,\pi]$. All these results are directly follows for the multilevel case, $D=[0,1]^d\times[-\pi,\pi]^d$.
	
	The major outcome of this article is the identification of a some subspace, $\tilde{\mathcal{G}}^p$ of  GLT sequences (we will introduce this space later) with subspaces of the measurable functions. We state the result below.
	\begin{theorem}\label{isometry}
		Let $D=[0,1]\times[-\pi,\pi]$. The Banach spaces $\tilde{\mathcal{G}}^p$ and $L^p(D)$, $D=[0,1]\times[-\pi,\pi]$, $1\leq p \leq \infty $ are isometrically isomorphic. In particular  $\tilde{\mathcal{G}}^{\infty}$ and $L^\infty(D)$ isomorphic as $C^*$-algebra. (The proof will be given in section 3)
	\end{theorem}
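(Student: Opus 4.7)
My plan is to exhibit an explicit map $\Phi : \tilde{\mathcal{G}}^p \to L^p(D)$ sending a (class of a) GLT sequence to its symbol $\kappa$, and to show it is a well-defined linear isometric bijection. The existence of such an assignment at the level of sets, together with well-definedness modulo the ``zero-distributed'' GLT sequences (those with $\kappa = 0$ a.e.), should be inherited from Barbarino's identification between GLT sequences and measurable functions, which gives the symbol uniquely up to almost-everywhere equality. The construction of $\tilde{\mathcal{G}}^p$ is the quotient of the GLT sequences whose symbol lies in $L^p(D)$ by this kernel; linearity of $\Phi$ then follows from the standard fact that $\{A_n\} \sim_{\mathrm{GLT}} \kappa$ and $\{B_n\} \sim_{\mathrm{GLT}} \eta$ imply $\{\alpha A_n + \beta B_n\} \sim_{\mathrm{GLT}} \alpha\kappa + \beta \eta$.

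The isometric part is where the work lies, and it will depend on having defined the pseudo-metric/norm on $\tilde{\mathcal{G}}^p$ precisely so that it reads off the $L^p$-norm of the symbol. Concretely, one expects the norm of a class $[\{A_n\}]$ to be built from a limit of Schatten-$p$ (normalized) norms $\bigl(\tfrac{1}{n}\sum_{i=1}^n \sigma_i(A_n)^p\bigr)^{1/p}$ for $1\le p<\infty$ and from a spectral-norm analogue (modulo small-rank and small-norm perturbations, in the spirit of the a.c.s.\ definition) for $p=\infty$. Once the norm is set up this way, the isometry $\|[\{A_n\}]\|_{\tilde{\mathcal{G}}^p} = \|\kappa\|_{L^p(D)}$ will be essentially a rewriting of the singular value distribution relation $\{A_n\}\sim_\sigma \kappa$, which is implied by $\{A_n\}\sim_{\mathrm{GLT}}\kappa$, applied to the continuous function $F(t)=t^p$ (after a truncation argument to fit into $C_c(\mathbb{R})$ in the $p<\infty$ case). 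Surjectivity of $\Phi$ will follow from the construction, internal to the GLT theory, that produces a GLT sequence with any prescribed measurable symbol: start from simple tensors $a\otimes f$ realised by the $LT_n^m(a,f)$ matrices, approximate an arbitrary $\kappa\in L^p(D)$ by such simple tensors in the appropriate norm, and pass to the limit using completeness of $\tilde{\mathcal{G}}^p$ (which is where Barbarino's completeness theorem is invoked).

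For the $C^*$-algebra statement in the case $p=\infty$, I would first verify that $\tilde{\mathcal{G}}^\infty$ is closed under the induced product $[\{A_n\}][\{B_n\}] = [\{A_nB_n\}]$ and involution $[\{A_n\}]^* = [\{A_n^*\}]$, which at the level of symbols correspond to pointwise multiplication and complex conjugation; the required algebraic identities for GLT symbols of products and adjoints of GLT sequences are part of the standard GLT axioms. The $C^*$-identity $\|x^*x\| = \|x\|^2$ then transfers from $L^\infty(D)$ via the already-established isometry, and commutativity of both algebras means there is nothing further to check about the involution. Thus once $\Phi$ is known to be an isometric algebra homomorphism, the $C^*$-isomorphism statement is automatic.

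The chief obstacle will be the precise matching between the norm on $\tilde{\mathcal{G}}^p$ and $\|\kappa\|_{L^p}$, and in particular pushing the identity $\tfrac{1}{n}\sum_i F(\sigma_i(A_n))\to \tfrac{1}{\mathrm{meas}(D)}\int_D F(|\kappa|)$ from $F\in C_c(\mathbb{R})$ to $F(t)=t^p$; this requires a uniform integrability argument or a reduction to truncations $F_k(t) = \min(t,k)^p$ together with control on the contribution of large singular values, which is exactly the sort of control that an a.c.s.\ decomposition $A_n = B_{n,m}+R_{n,m}+N_{n,m}$ provides. The case $p=\infty$ needs a separate treatment, since spectral-norm clustering (rather than a Szegő-type averaging) is the right notion, and one must show that the essential supremum of $|\kappa|$ is the infimum, over a.c.s.\ decompositions, of $\limsup_n \|B_{n,m}\|$ as $m\to\infty$.
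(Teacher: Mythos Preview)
Your plan matches the paper's proof in structure: define $\Phi$ via the GLT symbol, get well-definedness and linearity from the standard GLT calculus (properties (1)(a)--(d)), surjectivity from the existence of a GLT sequence with prescribed symbol (property (2)), and reduce everything to the isometry statement. The $C^*$-part is handled exactly as you describe.

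Two points of divergence are worth noting. First, the paper's norm $q_{w^p}$ is not literally $\limsup_n\bigl(\tfrac{1}{n}\sum_i\sigma_i(A_n)^p\bigr)^{1/p}$ but the infimum of $\limsup_n n^{-1/p}\|N_n\|_{sp}$ over decompositions $A_n=R_n+N_n$ with $\operatorname{rank}(R_n)=o(n)$; you acknowledge this for $p=\infty$ but elide it for $p<\infty$, and it matters, since a GLT sequence in $\mathcal{A}_{w^p}$ can have $\tfrac{1}{n}\sum_i\sigma_i(A_n)^p$ unbounded while $q_{w^p}$ stays finite. Your truncation argument will still go through once you observe (via SVD) that the optimal $R_n$ simply carries the top $o(n)$ singular values, so $q_{w^p}(\{A_n\})^p=\inf_{r_n=o(n)}\limsup_n\tfrac{1}{n}\sum_{i>r_n}\sigma_i(A_n)^p$; this is the $p<\infty$ analogue of the paper's Theorem~2.7 and is what actually makes the uniform-integrability step work. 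Second, for $p=\infty$ the paper does not argue via $F(t)=t^p$ but first proves the level-set characterization $q_w(\{A_n\})=\inf\{\alpha:\lim_n n^{-1}\#\{\sigma_i(A_n)\ge\alpha\}=0\}$ (Theorem~2.7) and then sandwiches indicator functions by $C_c$ functions against the distribution relation (Lemma~3.2); this is cleaner than trying to take $p\to\infty$ in a Schatten-norm argument and is essentially what your last paragraph is groping toward.

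One cosmetic omission: because the singular value distribution is normalized by $\tfrac{1}{\mu(D)}=\tfrac{1}{2\pi}$, the raw symbol map is off by $(2\pi)^{1/p}$ from an isometry for $p<\infty$; the paper absorbs this into $\phi_p$ by setting $\phi_p(\{A_n\})=(2\pi)^{1/p}f$.
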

	
	The completeness of $\tilde{\mathcal{E}}$, the set of all equivalence class of matrices of increasing order with respect to a.c.s metric  was proved in \cite{barbarino}. Also it is known that the class of GLT sequences with respect to a.c.s metric form a complete *- algebra \cite{stefano}. This space is isometrically  isomorphic to the class of measurable functions on $D$ \cite{barbarino}.

	In this article, we introduce seminorms  $q_w$ and $q_{w^p}$ in connection with these convergence notions and obtain the Banach spaces $\tilde{\mathcal{A}}_w$ and $\tilde{\mathcal{A}}_{w^p}$ with respect to the norms induced by these seminorms . In particular  we prove that $\tilde{\mathcal{A}}_w$ is a $C^*$-algebra. The space $\tilde{\mathcal{G}}^p$ is the collection of GLT sequences in $\tilde{\mathcal{A}}_{w^p}$, for $1\leq p\leq \infty$. It turns out that $\tilde{\mathcal{G}}^p$ are Banach spaces with respect to the norms induced by these seminorms. Our main  result stated above says that  $\tilde{\mathcal{G}}^p$ are isometrically isomorphic to $L^p(D)$. 
	
	The article is organized as follows. In the next section, we introduce the seminorms $q_w$, $q_{w^p}$  and obtain its relation with Type 2 weak cluster convergence. Also, we identify the Banach spaces $\tilde{\mathcal{A}}_w$ and $\tilde{\mathcal{A}}_{w^p}$ of matrix sequences. In the third section, we prove our main results; obtain the equivalence between these Banach spaces with subspaces of measurable functions. In the fourth section, as an application of our main results, we obtain a Korovkin-type approximation theorem for GLT sequences analogous to the result for Toepliz sequences. The article ends with a concluding section, mentioning some further possibilities.
	\section{Banach Spaces of Matrix Sequences}
	
	Motivated from the notion of a.c.s, we introduced certain seminorms on  the space of all sequences of matrices of increasing size.\\
	Let $\mathcal{E}=\{\{A_{n}\}_n: A_{n} \text{ is a matrix of finite order}\}.$
	For $A_n\in M_n(\mathbb{C})$, let
	$$P(A_n)=inf\left\{\frac{rank(R_n)}{n}+\|{N_n}\|\,:\, R_n+N_n=A_n, R_n,N_n\in M_n(\mathbb{C})\right\},$$
	where infimum is taken over all decomposition of $A_n=R_n+N_n$.\\
	Let $\{A_n\}_n\in \mathcal{E}$, we can define 
	$$p(\{A_n\}_n)=\limsup_{n\to \infty}P(A_n).$$
	For $\{A_n\}_n,\{B_n\}_n\in \mathcal{E}$, define
	$$d_{acs}(\{A_n\}_n,\{ B_n\}_n)=p(\{A_n-B_n\}_n).$$
	
	It was proved in \cite{barbarino,garoni1} that $d_{acs}$ is a pseudo metric on $\mathcal{E}$ which turns $\mathcal{E}$ into a complete pseudometric space $(\mathcal{E},d_{acs})$ and the convergence of $\{ \{B_{n,m}\}_{n} \}_{m}$  to $\{A_n\}_{n}$ is called  a.c.s. convergence, denoted by $\{\{B_{n,m}\}_n\}_m\xrightarrow{a.c.s.}\{A_n\}_n$ as $m\to \infty$.  	
	Let $L=\{ \{A_n\}_n\in \mathcal{E}: p(\{A_n\}_n)=0\}$. Then the quotient space $\tilde{\mathcal{E}}=\mathcal{E}/L$ will be a metric space with respect to the metric $\tilde{d}_{acs} :\tilde{\mathcal{E}} \times \tilde{\mathcal{E}} \to \mathbb{R}$ defined by $$\tilde{d}_{acs}(\{A_n\}_n+L,\{B_n\}_n+L)=d_{acs}(\{A_n\}_n,\{B_n\}_n)$$
	The following Theorem in \cite{garoni1} gives an equivalent definition for $P(A)$.
	\begin{theorem}(Theorem 5 of \cite{garoni1})\label{p(a)}
		For any matrix $A_n\in M_n(\mathbb{C})$,
		$$P(A_n)=\min_{i=1,2,\ldots,n}\left\{\frac{i}{n}+\sigma_{i+1}(A_n)\right\},$$
		where $\sigma_i(A_n)$ is the $i^{th}$ singular value of $A_n$ arranged in non-increasing order and we assume by convention that $\sigma_{n+1}(A_n)=0.$
	\end{theorem}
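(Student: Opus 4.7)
The argument naturally splits into two inequalities. For the upper bound $P(A_n) \leq \min_{i}\{i/n + \sigma_{i+1}(A_n)\}$, I would exhibit an explicit witnessing decomposition for each $i$. Writing the singular value decomposition $A_n = U\Sigma V^*$ with $\Sigma = \text{diag}(\sigma_1(A_n),\ldots,\sigma_n(A_n))$, let $R_n = U\Sigma_i V^*$ be the Eckart--Young truncation, where $\Sigma_i$ retains the largest $i$ singular values and zeroes out the rest, and set $N_n := A_n - R_n$. Then $N_n$ has singular values $\sigma_{i+1}(A_n),\ldots,\sigma_n(A_n),0,\ldots,0$ in the same orthonormal frames, so $\|N_n\| = \sigma_{i+1}(A_n)$, while $\text{rank}(R_n)\leq i$. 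Plugging into the definition of $P(A_n)$ yields $P(A_n) \leq i/n + \sigma_{i+1}(A_n)$ for every $i$.

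For the matching lower bound, let $A_n = R_n + N_n$ be an arbitrary admissible decomposition and set $k := \text{rank}(R_n)$. The main tool is Weyl's additive inequality for singular values,
\[
\sigma_{r+s-1}(X+Y) \leq \sigma_{r}(X) + \sigma_{s}(Y),
\]
applied with $X = R_n$, $Y = N_n$, $r = k+1$, and $s = 1$. Since $\text{rank}(R_n) = k$ implies $\sigma_{k+1}(R_n) = 0$, this collapses to $\sigma_{k+1}(A_n) \leq \|N_n\|$, whence
\[
\frac{\text{rank}(R_n)}{n} + \|N_n\| \;\geq\; \frac{k}{n} + \sigma_{k+1}(A_n) \;\geq\; \min_{i}\left\{\frac{i}{n}+\sigma_{i+1}(A_n)\right\},
\]
using the convention $\sigma_{n+1}(A_n)=0$ to handle the boundary case $k=n$. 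Taking the infimum over all decompositions produces the desired lower bound, and the two inequalities together prove equality.

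\textbf{Main obstacle.} The only non-routine ingredient is Weyl's additive perturbation inequality for singular values, which is a classical but non-obvious fact; absent this, the argument would be stuck trying to compare $\|N_n\|$ directly with an arbitrary singular value of $A_n$. Beyond that, the remaining delicate point is bookkeeping at the endpoint indices: one should verify that the decompositions $R_n=0,\,N_n=A_n$ (contributing $\|A_n\|=\sigma_1(A_n)$) and $R_n=A_n,\,N_n=0$ (contributing $1$) are compatible with the stated range $i \in \{1,\dots,n\}$, and that the Eckart--Young witness actually attains the infimum so that the "$\inf$" in the definition of $P(A_n)$ can legitimately be written as a "$\min$" in the conclusion.
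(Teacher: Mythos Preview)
The paper does not supply its own proof of this statement; it is quoted as Theorem~5 of \cite{garoni1} and used as a black box. Your two-inequality argument---SVD truncation (Eckart--Young) for the upper bound and Weyl's additive singular-value inequality $\sigma_{k+1}(R_n+N_n)\le\sigma_{k+1}(R_n)+\sigma_1(N_n)$ for the lower bound---is precisely the standard proof and is correct.

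Your caution about the endpoints is warranted and in fact exposes a genuine issue with the formula as printed here. The decomposition $R_n=0$, $N_n=A_n$ has rank $k=0$ and contributes the value $\sigma_1(A_n)$ to the infimum defining $P(A_n)$, but the index $i=0$ is excluded from the displayed range $i\in\{1,\dots,n\}$. Consequently the right-hand side as written equals $1/n$ when $A_n=0$, whereas $P(0)=0$. Your lower-bound step actually proves $\tfrac{\mathrm{rank}(R_n)}{n}+\|N_n\|\ge \tfrac{k}{n}+\sigma_{k+1}(A_n)$ for $k=0,1,\dots,n$, so the identity that your argument establishes is
\[
P(A_n)=\min_{i=0,1,\dots,n}\Bigl\{\tfrac{i}{n}+\sigma_{i+1}(A_n)\Bigr\},
\]
and this is the version one should record. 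For the purposes of the present paper this discrepancy is harmless, since the only place Theorem~\ref{p(a)} is invoked is in Lemma~\ref{F type 2}, where one merely needs $P(A_n)<\epsilon$ to imply the existence of some index $j$ with $j/n+\sigma_{j+1}(A_n)<\epsilon$.
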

	\begin{remark}
		$\tilde{d}_{acs}$ in $\tilde{\mathcal{E}}$ is not induced from any norm. For if, $\{A_n\}_n=\{I_n\}_n$, sequence of identity matrices and $\{B_n\}_n=\{O_n\}_n$ is the sequence of zero matrices, then
		\begin{equation*}
			\begin{aligned}
				d_{acs}(\{A_n\}_n,\{B_n\}_n)&=d_{acs}(\{I_n\}_n,\{O_n\}_n)&=1.\\
				d_{acs}(\{2A_n\}_n,\{2B_n\}_n)&=d_{acs}(\{2I_n\}_n,\{O_n\}_n)&=1.
			\end{aligned}
		\end{equation*}
		Therefore $d_{acs}(\{2A_n\}_n,\{2B_n\}_n)\neq 2d_{acs}(\{A_n\}_n,\{B_n\}_n)$.
	\end{remark}
	\begin{definition}\normalfont
		Let $\{A_n\}_n$ be a matrix sequence and the functions  $q_w,q_{w^p}:\mathcal{E}\to \mathbb{R}$ defined as
				$$q_w(\{A_n\}_n)=\inf \left\{\limsup_{n\to \infty}\|N_n\|:\,\, R_n+N_n=A_n, \,\, \lim_{n\to \infty}\frac{rank R_n}{n}=0\right\},$$
				$$q_{w^p}(\{A_n\}_n)=\inf \left\{\limsup_{n\to \infty}\frac{\|N_n\|_{sp}}{n^{1/p}}:\,\, R_n+N_n=A_n, \,\, \lim_{n\to \infty}\frac{rank R_n}{n}=0\right\},
				1\leq p< \infty.$$
		Here the infimum is taken over all such decompositions of $A_n$ and $\|N_n\|_{sp}$ denotes schatten $p-$norm.
	\end{definition}
	The  subspaces $\mathcal{A}_w$ and $\mathcal{A}_{w^p}$ of $\mathcal{E}$ are defined as follows: 
	$$\mathcal{A}_w=\left\{\{A_n\}_n\in \mathcal{E}\,\,:\,\,q_w(\{A_n\}_n)<\infty\right\}\,,\quad 
	\mathcal{A}_{w^p}=\left\{\{A_n\}_n\in \mathcal{E}\,\,:\,\,q_{w^p}(\{A_n\}_n)<\infty\right\}.$$
	
	Now we recall the notion of strong cluster convergence, weak cluster convergence and uniform cluster convergence used in \cite{rahul,stefano3}. 
	\begin{definition}\normalfont
		Let $\{A_n\}_n $ and $ \{B_n\}_n$ be two sequences of matrices of increasing size. We say that $ \{A_n-B_n\}_n$ converges to constant sequence $\{O_n\}_n$ (sequence of zero-matrices) in \textit{Type 2 weak  cluster sense} if for any $\epsilon > 0$, there exists integers $n_{1,\epsilon} ,n_{2,\epsilon}$, such that for $n>n_{2,\epsilon}, A_n-B_n=R_n+N_n$ with rank $R_n\leq n_{1,\epsilon}$ and $ \|N_n\|<\epsilon$. Also $n_{1,\epsilon}$ depends on both $n$ and $\epsilon$ and is of $o(n)$. \\	
		The convergence is in the \textit{ Type 2 uniform cluster sense} if $n_{1,\epsilon}$ is independent of $\epsilon$ and in the \textit{ Type 2 strong cluster sense } if $n_{1,\epsilon}$ depends only on  $\epsilon$.
	\end{definition}
	\begin{remark}(\cite{rahul})\label{sigma clustering}
		$\{A_n-B_n\}_n$ converges to $\{O_n\}_n$ in Type 2 weak cluster sense if and only if for any $\epsilon >0$, there exist integers $n_{1,\epsilon},n_{2,\epsilon}$ such that for all $n_{2,\epsilon}$, except at most possibly $n_{1,\epsilon}$ (dependent of size $n$ and is of $o(n)$) singular values, all singular values $\{A_n-B_n\}_n$ lie in the interval $[0,\epsilon)$. The Type 2 convergence is equivalent to the singular value clustering. There is a notion of Type 1  convergence that is equivalent to eigenvalue clustering. Both originated from the study of preconditioners in numerical linear algebra problems (see \cite{stefano3} for eg.).
	\end{remark}
	The following lemma is a consequence of the results in \cite{tyrtyshnikov1996unifying} which provides a
	criterion to establish the convergence notions defined above.
	\begin{lemma}\label{frobenius}
		Let $\{A_n\}_n$ and $\{B_n\}_n$ be two sequences of $n\times n$  matrices of growing order. If $\|A_n-B_n\|_{F}^2=o(n)$ , then we have the convergence in the Type 2 weak cluster sense.	If $\|A_n-B_n\|_{F}^2=O(1)$, then the convergence is Type 2 strong cluster sense.
	\end{lemma}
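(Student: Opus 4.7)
The plan is to reduce the statement to a direct application of the singular value decomposition, using the Frobenius norm to control how many singular values of $C_n := A_n - B_n$ can exceed a prescribed threshold $\epsilon > 0$.

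Fix $\epsilon > 0$ and let $\sigma_1(C_n) \geq \sigma_2(C_n) \geq \cdots \geq \sigma_n(C_n) \geq 0$ denote the singular values in non-increasing order, with SVD $C_n = \sum_{i=1}^{n} \sigma_i(C_n) u_i v_i^*$. Let $k_n(\epsilon) = \#\{i : \sigma_i(C_n) \geq \epsilon\}$ and take the canonical decomposition
\begin{equation*}
C_n = R_n + N_n, \qquad R_n = \sum_{i=1}^{k_n(\epsilon)} \sigma_i(C_n)\, u_i v_i^*, \qquad N_n = \sum_{i=k_n(\epsilon)+1}^{n} \sigma_i(C_n)\, u_i v_i^*.
\end{equation*}
By construction $\mathrm{rank}(R_n) \leq k_n(\epsilon)$ and $\|N_n\| = \sigma_{k_n(\epsilon)+1}(C_n) < \epsilon$ (with the convention $\sigma_{n+1} = 0$ if $k_n(\epsilon) = n$). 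So the whole problem reduces to controlling $k_n(\epsilon)$.

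The control comes from the elementary bound
\begin{equation*}
\|C_n\|_F^2 \;=\; \sum_{i=1}^{n} \sigma_i(C_n)^2 \;\geq\; \sum_{i=1}^{k_n(\epsilon)} \sigma_i(C_n)^2 \;\geq\; k_n(\epsilon)\, \epsilon^2,
\end{equation*}
so $k_n(\epsilon) \leq \|C_n\|_F^2/\epsilon^2$. Under the hypothesis $\|C_n\|_F^2 = o(n)$, this gives $k_n(\epsilon) = o(n)$, so setting $n_{1,\epsilon} := k_n(\epsilon)$ (allowed to depend on $n$) and any $n_{2,\epsilon} \in \mathbb{N}$ verifies the Type 2 weak cluster definition. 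Under the stronger hypothesis $\|C_n\|_F^2 = O(1)$, say bounded by $M$, one has $k_n(\epsilon) \leq M/\epsilon^2$ uniformly in $n$, so taking $n_{1,\epsilon} := \lceil M/\epsilon^2 \rceil$, which now depends only on $\epsilon$, yields Type 2 strong cluster convergence.

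There is no serious obstacle here — the only care needed is to match the bookkeeping against the definition of Type 2 weak versus strong clustering (specifically, whether $n_{1,\epsilon}$ is allowed to depend on $n$), and to justify the trivial case $k_n(\epsilon) = n$ via the convention $\sigma_{n+1} = 0$ used in Theorem \ref{p(a)}. The whole argument is essentially the standard Markov-type pigeonholing of singular values against the Frobenius norm.
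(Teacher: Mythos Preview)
Your argument is correct: the SVD-based splitting together with the Markov-type bound $k_n(\epsilon)\leq \|C_n\|_F^2/\epsilon^2$ immediately yields the required rank/norm decomposition, and the distinction between $o(n)$ and $O(1)$ cleanly separates the weak and strong cases.

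Note, however, that the paper does not supply its own proof of this lemma; it is stated there as a consequence of results in \cite{tyrtyshnikov1996unifying} and used as a black box. Your write-up therefore goes beyond what the paper does, giving a short self-contained argument in place of an external citation. That is a net gain: the proof is elementary, matches the paper's definitions verbatim, and requires no machinery beyond the SVD. The only cosmetic point is that you might want to make explicit that $n_{2,\epsilon}$ can be taken as $1$ (or any fixed integer) in both cases, since the decomposition is available for every $n$.
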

	In \cite{stefano} Carlo Garoni and Stefano Serra-Capizzano proved that a.c.s convergence and Type 2 weak convergence are equivalent. We state the result below.
	\begin{theorem}(Theorem 4.1 of \cite{stefano})\label{thm1}
		Let $\{A_n\}_n $ and $ \{B_n\}_n$ be two sequences of matrices of increasing size. Then $\{A_n-B_n\}_n$ converges to $\{O_n\}_n$ in type 2 weak cluster sense if and only if $d_{acs}(\{A_n\}_n,\{B_n\}_n)=0.$ 
	\end{theorem}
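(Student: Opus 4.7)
The plan is to pass through the explicit singular-value formula for $P$ supplied by Theorem \ref{p(a)}, together with the singular-value characterization of Type 2 weak cluster convergence noted in Remark \ref{sigma clustering}. Once both conditions are expressed in terms of the sequence $\{\sigma_k(A_n-B_n)\}_k$, the equivalence should fall out with only a careful unpacking of the ``$o(n)$'' quantifier.

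For the direction $d_{acs}(\{A_n\}_n,\{B_n\}_n)=0 \Rightarrow$ Type 2 weak cluster convergence, I would argue as follows. For each $n$, let $i_n^{*}\in\{1,\ldots,n\}$ be an index realizing the minimum in Theorem \ref{p(a)}, so that
\[
\frac{i_n^{*}}{n}+\sigma_{i_n^{*}+1}(A_n-B_n)=P(A_n-B_n).
\]
Since $\limsup_n P(A_n-B_n)=0$, both terms on the left vanish along the full sequence: in particular $i_n^{*}/n\to 0$ (so $i_n^{*}=o(n)$) and $\sigma_{i_n^{*}+1}(A_n-B_n)\to 0$. Using the singular value decomposition of $A_n-B_n$, write $R_n$ for the best rank-$i_n^{*}$ approximation and $N_n=(A_n-B_n)-R_n$. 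Then $\mathrm{rank}(R_n)\leq i_n^{*}=o(n)$ and, by the Eckart--Young identity $\|N_n\|=\sigma_{i_n^{*}+1}(A_n-B_n)$, we have $\|N_n\|\to 0$. Hence for every $\varepsilon>0$ the decomposition $A_n-B_n=R_n+N_n$ witnesses Type 2 weak cluster convergence for all sufficiently large $n$.

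For the converse, suppose Type 2 weak cluster convergence holds. Fix $\varepsilon>0$ and, for all $n\geq n_{2,\varepsilon}$, take the given decomposition $A_n-B_n=R_n+N_n$ with $\mathrm{rank}(R_n)\leq n_{1,\varepsilon}(n)=o(n)$ and $\|N_n\|<\varepsilon$. Applying the definition of $P$ directly (or, equivalently, estimating the minimum in Theorem \ref{p(a)} with $i=\mathrm{rank}(R_n)$ via $\sigma_{\mathrm{rank}(R_n)+1}(A_n-B_n)\leq\|N_n\|$), I would deduce
\[
P(A_n-B_n)\leq \frac{\mathrm{rank}(R_n)}{n}+\|N_n\|\leq \frac{n_{1,\varepsilon}(n)}{n}+\varepsilon.
\]
Letting $n\to\infty$ and using $n_{1,\varepsilon}(n)/n\to 0$ gives $\limsup_n P(A_n-B_n)\leq\varepsilon$; since $\varepsilon$ is arbitrary, $d_{acs}(\{A_n\}_n,\{B_n\}_n)=0$.

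The only step that requires care is the quantifier juggling in the forward direction: Type 2 weak cluster convergence demands, for each fixed $\varepsilon$, a decomposition whose rank component is $o(n)$ \emph{as $n\to\infty$ with $\varepsilon$ held fixed}, and not merely bounded by $\varepsilon\cdot n$. The main obstacle is therefore to show that a single $\varepsilon$-independent choice $i_n^{*}$ simultaneously produces $i_n^{*}/n\to 0$ and $\sigma_{i_n^{*}+1}(A_n-B_n)<\varepsilon$ eventually; this is exactly what Theorem \ref{p(a)} delivers, since both summands of $P(A_n-B_n)$ are bounded by $P(A_n-B_n)$ itself, which tends to $0$ by hypothesis.
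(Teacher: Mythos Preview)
Your argument is correct in both directions, and the quantifier check you flag in the last paragraph is exactly the point that needs attention; Theorem~\ref{p(a)} handles it just as you say, because a single $\varepsilon$-independent choice $i_n^{*}$ controls both summands simultaneously.

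However, there is nothing to compare against: the paper does not prove this statement. It is quoted as Theorem~4.1 of \cite{stefano} and presented without proof, immediately after the sentence ``In \cite{stefano} Carlo Garoni and Stefano Serra-Capizzano proved that a.c.s convergence and Type 2 weak convergence are equivalent. We state the result below.'' So your proposal supplies a (valid) self-contained proof where the paper simply cites the external reference.
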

	Next theorem gives a characterization for $q_w$ leads to a relation between type 2 weak convergence and  $q_w$, analogous to Theorem \ref{thm1}.
	\begin{theorem}\label{sigma}
		Let $\{A_n\}_n$ be a matrix sequence and $\sigma_i(A_n)$ be the $i^{th}$ singular value of the matrix $A_n$  arranged in non-increasing order. Then,
		$$q_w(\{A_n\}_n)=\inf\left\{\alpha \in [0,\infty):\lim_{n\to \infty}\frac{\#(\sigma(A_n)\geq \alpha )}{n}=0\right\}.$$	
	\end{theorem}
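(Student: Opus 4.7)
Denote by $\alpha^\ast$ the infimum on the right-hand side. The plan is to establish the two inequalities $q_w(\{A_n\}_n) \leq \alpha^\ast$ and $\alpha^\ast \leq q_w(\{A_n\}_n)$ separately. In both directions, the singular value decomposition will be the natural bridge between the analytic description (a decomposition $A_n = R_n + N_n$) and the spectral description (counting large singular values).

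For $q_w(\{A_n\}_n) \leq \alpha^\ast$, I fix any $\alpha > \alpha^\ast$, so that $r_n := \#\{i : \sigma_i(A_n) \geq \alpha\}$ satisfies $r_n/n \to 0$. Using an SVD $A_n = \sum_{i=1}^n \sigma_i(A_n) u_i v_i^\ast$, I split the sum into $R_n := \sum_{i \leq r_n} \sigma_i(A_n) u_i v_i^\ast$ (rank at most $r_n = o(n)$) and $N_n := \sum_{i > r_n} \sigma_i(A_n) u_i v_i^\ast$ (spectral norm $\sigma_{r_n+1}(A_n) < \alpha$). Then $\limsup_n \|N_n\| \leq \alpha$, and the definition of $q_w$ gives $q_w(\{A_n\}_n) \leq \alpha$. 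Letting $\alpha \downarrow \alpha^\ast$ finishes this direction.

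For the reverse inequality, I pick any $\alpha > q_w(\{A_n\}_n)$ and an admissible decomposition $A_n = R_n + N_n$ with $\text{rank}(R_n)/n \to 0$ and $\limsup_n \|N_n\| < \alpha$. The key tool is the Weyl-type inequality for singular values
$$\sigma_{i+j-1}(X+Y) \leq \sigma_i(X) + \sigma_j(Y).$$
Setting $i = \text{rank}(R_n) + 1$ so that $\sigma_i(R_n) = 0$, this yields $\sigma_{\text{rank}(R_n) + j}(A_n) \leq \sigma_j(N_n) \leq \|N_n\|$ for every $j \geq 1$. Hence, for $n$ large enough to ensure $\|N_n\| < \alpha$, the count $\#\{\sigma_i(A_n) \geq \alpha\}$ is at most $\text{rank}(R_n) = o(n)$. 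Thus $\alpha$ belongs to the set defining $\alpha^\ast$, giving $\alpha^\ast \leq \alpha$; taking the infimum over such $\alpha$ yields $\alpha^\ast \leq q_w(\{A_n\}_n)$.

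The main obstacle is the second direction, where one must transfer spectral information from $N_n$ back to $A_n$ through the low-rank perturbation $R_n$; this is precisely what the Weyl inequality achieves. A minor but necessary technicality is to pass from $\limsup_n \|N_n\| < \alpha$ to the eventual pointwise bound $\|N_n\| < \alpha$, which justifies the ``for $n$ large enough'' step.
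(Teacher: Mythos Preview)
Your proof is correct and follows essentially the same approach as the paper: both directions rely on an SVD-based splitting for the inequality $q_w \leq \alpha^\ast$ and on the Weyl-type singular value perturbation inequality (the paper uses the special case $\sigma_i(A_n) \leq \sigma_i(R_n) + \|N_n\|$) for the reverse inequality. Your presentation is in fact somewhat more streamlined than the paper's, which in the $q_w \geq \alpha^\ast$ direction introduces an auxiliary SVD-based decomposition $\tilde{R}_n, \tilde{N}_n$ before arriving at the same conclusion.
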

	\begin{proof}
		
		Let $\{R_n\}_n$ and $\{N_n\}_n$ be any matrix sequence such that $\{R_n\}_n+\{N_n\}_n=\{A_n\}_n$ and $\lim_{n\to \infty}\frac{rank\,R_n}{n}=0$.\\
		Let $\sigma_1(A_n)\geq\sigma_2(A_n)\geq\ldots\geq \sigma_n(A_n)$ be the singular values of $A_n$ arranged in non increasing order. We know that
		\begin{equation*}
			\begin{aligned}
				\sigma_i(A_n)\leq\sigma_i(R_n)+\|N_n\|.
			\end{aligned}
		\end{equation*}
		Setting $r_1=rank\,R_n$, then $\sigma_i(A_n)\leq  \|N_n\|$, for all $i>r_1$. Let $r_2$ be the smallest integer such that $\sigma_i(A_n) \leq  \|N_n\|$ for all $i>r_2$.  
		
		Then $r_1\geq r_2$, and $\sigma_{r_2}(A_n)>\|N_n\|\geq\sigma_{r_2+1}(A_n)$. Let $A_n=U_n\Sigma_n{V}_{n}^{*}$ be a Singular Value Decomposition (SVD) of $A_n$ and set,
		$$\tilde{R}_n=U_n\,diag(\sigma_1(A_n),\ldots,\sigma_{r_2}(A_n),0,\ldots,0){V}_{n}^{*}$$
		$$\tilde{N}_n=U_n\,diag(0,\ldots,0,\sigma_{r_2+1}(A_n),\ldots,\sigma_n(A_n)){V}_{n}^{*}.$$  
		Then, $A_n=\tilde{R}_n+\tilde{N}_n$, let $\limsup_{n\to \infty}\|\tilde{N_n}\|=\alpha$.
		$$rank(R_n)=r_1\geq r_2=rank(\tilde{R}_n),\,\,\|N_n\|\geq\sigma_{r_2+1}(A_n)=\|\tilde{N}_n\|.$$
		Then, $\displaystyle \lim_{n\to \infty}\frac{\#(\sigma(A_n)>\alpha)}{n}=\lim_{n\to \infty}\frac{r_2}{n}\leq \lim_{n\to \infty}\frac{r_1}{n}=0 \text{ and}$ $$\limsup_{n\to \infty}\|N_n\|\geq\limsup_{n\to \infty}\|\tilde{N}_n\|=\alpha.$$
		Therefore, $q_w(\{A_n\}_n)\geq\inf\left\{\alpha \in [0,\infty):\lim_{n\to \infty}\frac{\#(\sigma(A_n)\geq \alpha )}{n}=0\right\}.$
		
		To prove the other inequality, let $A_n=U_n\Sigma_n{V}_{n}^{*}$ be a SVD of $A_n$. Let $\alpha \in [0,\infty)$ such that $\lim_{n\to \infty}\frac{\#(\sigma(A_n)\geq \alpha )}{n}=0$. Let $R_n=U_n\tilde{\Sigma}_nV_{n}^{*}, N_n=U_n\hat{\Sigma}_nV_{n}^{*}$, where $\tilde{\Sigma}_n$ is the diagonal  matrix obtained from $\Sigma_n$ by setting $0$ to all the singular values of $A_n$ that are less than or equal to $\alpha$, and $\hat{\Sigma}_n=\Sigma-\tilde{\Sigma}_n$. Hence $\lim_{n\to \infty}\frac{rank(\tilde{\Sigma}_n)}{n}=0$.
		Then, ${A_n}={R_n}+{N_n},\, rank(R_n)=\#\{\sigma(A_n)\geq \alpha \}$ and $\limsup_{n\to \infty}\|N_n\|\leq \alpha$. By taking infimum over all such $\alpha,$ we get
		\begin{equation*}
			\begin{aligned}
				q_w(\{A_n\}_n)&\leq \inf\left\{\alpha \in [0,\infty) : \limsup_{n\to \infty}\|N_n\|\leq \alpha\right\}\\
				&\leq\inf\left\{\alpha \in {[0,\infty)}:\lim_{n\to \infty}\frac{\#(\sigma(A_n)\geq \alpha )}{n}=0\right\}
			\end{aligned}
		\end{equation*}		
	\end{proof}	
	Using the characterization of $q_w$ and remark \ref{sigma clustering}, we obtain the following corollaries.
	\begin{cor}\label{qw}
		Let $\{A_n\}_n $ and $ \{B_n\}_n$ be two sequences of matrices of increasing size. Then $\{A_n-B_n\}_n$ converges to $\{O_n\}_n$ in type 2 weak cluster sense if and only if $q_w(\{A_n-B_n\}_n)=0.$
	\end{cor}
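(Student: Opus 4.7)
The plan is to derive this corollary directly from the combination of Theorem \ref{sigma} (the singular-value characterization of $q_w$) and Remark \ref{sigma clustering} (the singular-value characterization of Type 2 weak cluster convergence). Writing $C_n = A_n - B_n$, both conditions will be translated into statements about the asymptotic density of large singular values of $C_n$, and then shown to coincide.

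For the forward direction, I would assume $\{C_n\}_n$ converges to $\{O_n\}_n$ in the Type 2 weak cluster sense. By Remark \ref{sigma clustering}, for every $\varepsilon > 0$, all but $o(n)$ of the singular values of $C_n$ lie in $[0,\varepsilon)$, which rewrites as
\[
\lim_{n\to\infty}\frac{\#(\sigma(C_n)\geq \varepsilon)}{n}=0.
\]
Hence every $\varepsilon>0$ belongs to the set whose infimum defines $q_w(\{C_n\}_n)$ via Theorem \ref{sigma}, so $q_w(\{C_n\}_n)\leq \varepsilon$ for all $\varepsilon>0$, giving $q_w(\{C_n\}_n)=0$.

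For the reverse direction, assume $q_w(\{C_n\}_n)=0$. By Theorem \ref{sigma}, for every $\varepsilon>0$ there is some $\alpha\in[0,\varepsilon]$ with $\#(\sigma(C_n)\geq \alpha)/n \to 0$. Since $\#(\sigma(C_n)\geq \varepsilon)\leq \#(\sigma(C_n)\geq \alpha)$, we obtain $\#(\sigma(C_n)\geq \varepsilon)/n \to 0$ as well. This is exactly the criterion in Remark \ref{sigma clustering} for Type 2 weak cluster convergence of $\{C_n\}_n$ to $\{O_n\}_n$.

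Neither direction presents a real obstacle, because the heavy lifting has already been done in Theorem \ref{sigma}: both $q_w$ and Type 2 weak cluster convergence have been reduced to the same object, namely the density of the large singular values. The only point requiring a small amount of care is in the reverse direction, where one must note that the infimum in Theorem \ref{sigma} being zero does not immediately guarantee that $\varepsilon$ itself lies in that set; this is handled by monotonicity of $\#(\sigma(C_n)\geq \cdot)$ in the threshold, as used above.
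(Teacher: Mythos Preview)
Your proposal is correct and follows exactly the approach the paper indicates: the paper merely states that the corollary is obtained ``using the characterization of $q_w$ and Remark~\ref{sigma clustering},'' without spelling out the details, and your argument supplies precisely those details via Theorem~\ref{sigma} and the monotonicity of the singular-value count in the threshold.
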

	\begin{cor}
		Let $\{A_n\}_n $ and $ \{B_n\}_n$ be two sequences of matrices of increasing size. Then $\{A_n-B_n\}_n$ converges to $\{O_n\}_n$ in type 2 weak cluster sense if and only if $q_{w^p}(\{A_n-B_n\}_n)=0.$
	\end{cor}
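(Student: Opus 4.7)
The plan is to reduce the corollary to the equivalence $q_w(\{C_n\}_n) = 0 \iff q_{w^p}(\{C_n\}_n) = 0$, where $C_n := A_n - B_n$. By the immediately preceding Corollary \ref{qw}, Type 2 weak cluster convergence of $\{C_n\}_n$ to $\{O_n\}_n$ is equivalent to $q_w(\{C_n\}_n) = 0$, so once this equivalence is established the corollary follows at once. Throughout, I will use the ordering $\sigma_1(C_n) \geq \sigma_2(C_n) \geq \cdots \geq \sigma_n(C_n)$ and the characterization of $q_w$ provided by Theorem \ref{sigma}.

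For the direction $q_w = 0 \Rightarrow q_{w^p} = 0$, I would fix $\alpha > 0$ and use the SVD-based truncation employed in the second half of the proof of Theorem \ref{sigma}: decompose $C_n = R_n + N_n$ where $R_n$ retains the singular values of $C_n$ that are $\geq \alpha$ and $N_n$ retains those $< \alpha$. By Theorem \ref{sigma}, the hypothesis yields $\mathrm{rank}(R_n)/n \to 0$. Since every singular value of $N_n$ is bounded by $\alpha$, one has $\|N_n\|_{sp}^p \leq n\alpha^p$ and hence $\|N_n\|_{sp}/n^{1/p} \leq \alpha$. Taking infimum over such decompositions gives $q_{w^p}(\{C_n\}_n) \leq \alpha$, and letting $\alpha \downarrow 0$ completes this direction.

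For the reverse direction, the goal is to verify the criterion in Theorem \ref{sigma}, namely $\lim_n \#(\sigma(C_n) \geq \alpha)/n = 0$ for each $\alpha > 0$. Fix $\alpha, \epsilon > 0$ and choose $C_n = R_n + N_n$ with $\mathrm{rank}(R_n)/n \to 0$ and $\limsup_n \|N_n\|_{sp}/n^{1/p} < \epsilon$. The Weyl (Ky Fan) singular value inequality $\sigma_{i+j-1}(C_n) \leq \sigma_i(R_n) + \sigma_j(N_n)$ applied with $i = \mathrm{rank}(R_n) + 1$ gives $\sigma_{\mathrm{rank}(R_n) + j}(C_n) \leq \sigma_j(N_n)$ for all $j \geq 1$, while a Markov-type bound gives $\#\{j : \sigma_j(N_n) \geq \alpha\} \leq \|N_n\|_{sp}^p/\alpha^p$. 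Combining the two,
$$\#(\sigma(C_n) \geq \alpha) \;\leq\; \mathrm{rank}(R_n) \;+\; \frac{\|N_n\|_{sp}^p}{\alpha^p}.$$
Dividing by $n$ and taking limsup produces the bound $(\epsilon/\alpha)^p$. Since $\epsilon > 0$ was arbitrary, the limit is $0$ for every $\alpha > 0$, which is precisely $q_w(\{C_n\}_n) = 0$ by Theorem \ref{sigma}.

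The main obstacle is the asymmetry between the two seminorms: $q_{w^p}$ controls the total $\ell^p$-mass of the singular values of $N_n$, whereas $q_w$ and Type 2 weak clustering demand control of \emph{how many} singular values are large. Bridging the two requires both the Weyl/Ky Fan inequality (to transfer control from $N_n$ back to $C_n$ across the low-rank correction $R_n$) and the Markov-type inequality (to convert the $\ell^p$-mass estimate into a counting estimate). The normalization $\|N_n\|_{sp}/n^{1/p}$ in the definition of $q_{w^p}$ is exactly what makes these bounds compatible after dividing by $n$.
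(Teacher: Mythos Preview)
Your argument is correct. The paper's own proof is a single sentence: ``The result follows from $\frac{\|N_n\|_{sp}}{n^{1/p}}\leq \|N_n\|$ and Corollary~\ref{qw}.'' That inequality yields $q_{w^p}\le q_w$ (use the same decomposition in both definitions), so together with Corollary~\ref{qw} it gives the implication Type~2 weak $\Rightarrow q_{w^p}=0$; your SVD-truncation argument for this direction is just an explicit unfolding of the same idea. For the converse $q_{w^p}=0\Rightarrow$ Type~2 weak, however, the quoted inequality points the wrong way, and the paper does not spell out any argument. Your Weyl/Ky~Fan plus Markov-type counting estimate is exactly what is needed here: it converts Schatten-$p$ control on $N_n$ into a bound on the number of singular values of $C_n$ exceeding a threshold, and the normalization by $n^{1/p}$ makes the bound vanish after dividing by $n$. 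So your proof is not merely a different route---it supplies a complete justification of the ``if and only if'' where the paper's one-liner really only handles one direction.
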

	\begin{proof}
		The result follows from $\frac{\|N_n\|_{sp}}{n^{1/p}}\leq \|N_n\|$ and the Corollary \ref{qw}.
	\end{proof}
	Let  $L_w=\{ \{A_n\}_n\in\mathcal{A}_w: q_w(\{A_n\}_n)=0\},\, L_{w^p}=\{ \{A_n\}_n\in\mathcal{A}_{w^p}: q_{w^p}(\{A_n\}_n)=0\}$. Then  $\tilde{\mathcal{A}}_{w}=\mathcal{A}_w/L_w$ and $\tilde{\mathcal{A}}_{w^p}=\mathcal{A}_{w^p}/L_w$ are the quotient spaces of $\mathcal{A}_{w}$ and $\mathcal{A}_{w^p}$ respectively.
	
	Now we prove that $\tilde{\mathcal{A}}_{w}$ and $\tilde{\mathcal{A}}_{w^p}$ are Banach spaces.
	\begin{theorem}\label{completeness}
		$\tilde{\mathcal{A}}_w$ and $\tilde{\mathcal{A}}_
		{w^p}$  are Banach spaces with respect to the norms induced by $q_w$ and $q_{w^p}$ respectively. In particular, $\tilde{\mathcal{A}}_w$ forms a  $C^*$-algebra and $\tilde{\mathcal{A}}_{w^2}$ is a Hilbert space.
	\end{theorem}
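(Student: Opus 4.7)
The plan is to address in turn: (i) that $q_w$ and $q_{w^p}$ descend to bona fide norms on $\tilde{\mathcal{A}}_w$ and $\tilde{\mathcal{A}}_{w^p}$; (ii) completeness; (iii) the $C^*$-algebra structure on $\tilde{\mathcal{A}}_w$; and (iv) the Hilbert space structure on $\tilde{\mathcal{A}}_{w^2}$. Step (i) is routine: non-negativity is immediate, absolute homogeneity uses $\mathrm{rank}(\lambda R_n) = \mathrm{rank}(R_n)$ for $\lambda \neq 0$, and the triangle inequality comes from summing decompositions together with $\mathrm{rank}(R_n^1 + R_n^2) \leq \mathrm{rank}(R_n^1) + \mathrm{rank}(R_n^2)$; passing to the quotient by the null subspace then yields normed spaces.

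The main work is completeness. I would take a Cauchy sequence $\{\{A_n^{(k)}\}_n\}_k$ in $\tilde{\mathcal{A}}_w$, pass to a subsequence with $q_w(\{A_n^{(k+1)} - A_n^{(k)}\}_n) < 2^{-k}$, and invoke Theorem~\ref{sigma} to obtain $\alpha_k < 2^{-k}$ with $\#(\sigma(A_n^{(k+1)} - A_n^{(k)}) \geq \alpha_k)/n \to 0$ as $n \to \infty$. Truncating the SVD at threshold $\alpha_k$ yields a decomposition $A_n^{(k+1)} - A_n^{(k)} = R_n^{(k)} + N_n^{(k)}$ with $\|N_n^{(k)}\| < 2^{-k}$ and $\mathrm{rank}(R_n^{(k)})/n \to 0$. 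The decisive step is to pick an increasing sequence $n_k$ such that $\mathrm{rank}(R_n^{(j)})/n < 2^{-k}$ holds simultaneously for every $j \leq k$ whenever $n \geq n_k$; this is possible because the bound holds eventually for each fixed $j$, so one takes the maximum over $j \leq k$. Setting $K(n) := \max\{k : n_k \leq n\}$ (so $K(n) \to \infty$ as $n \to \infty$) and $A_n := A_n^{(K(n))}$, for $n \geq n_l$ one has
$$A_n - A_n^{(l)} = \tilde R_n + \tilde N_n, \qquad \tilde R_n := \sum_{j=l}^{K(n)-1} R_n^{(j)}, \quad \tilde N_n := \sum_{j=l}^{K(n)-1} N_n^{(j)},$$
with $\mathrm{rank}(\tilde R_n)/n \leq (K(n) - l)\,2^{-K(n)} \to 0$ and $\|\tilde N_n\| \leq \sum_{j \geq l} 2^{-j} = 2^{-l+1}$. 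Hence $q_w(\{A_n - A_n^{(l)}\}_n) \leq 2^{-l+1} \to 0$, proving convergence of the subsequence and (because the original sequence is Cauchy) of the full sequence. The $q_{w^p}$ case is handled identically after replacing $\|\cdot\|$ with $\|\cdot\|_{S^p}/n^{1/p}$.

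For the $C^*$-algebra structure on $\tilde{\mathcal{A}}_w$, writing $A_n B_n = (R_n^A B_n + N_n^A R_n^B) + N_n^A N_n^B$ shows the first summand has rank $\leq \mathrm{rank}(R_n^A) + \mathrm{rank}(R_n^B) = o(n)$ and the second has operator norm $\leq \|N_n^A\|\,\|N_n^B\|$, yielding submultiplicativity $q_w(\{A_n B_n\}_n) \leq q_w(\{A_n\}_n)\,q_w(\{B_n\}_n)$ and well-definedness of the product on equivalence classes. The involution $\{A_n\} \mapsto \{A_n^*\}$ preserves $q_w$ since $\sigma_i(A_n^*) = \sigma_i(A_n)$. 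The $C^*$-identity $q_w(\{A_n^* A_n\}_n) = q_w(\{A_n\}_n)^2$ follows from $\sigma_i(A_n^* A_n) = \sigma_i(A_n)^2$ together with Theorem~\ref{sigma}, via the substitution $\beta = \alpha^2$ in the infimum. For $\tilde{\mathcal{A}}_{w^2}$ I would verify the parallelogram law for $q_{w^2}$ by applying the Frobenius identity $\|N + N'\|_F^2 + \|N - N'\|_F^2 = 2\|N\|_F^2 + 2\|N'\|_F^2$ pointwise in $n$ to decompositions of $A_n \pm B_n$ and then taking limsups and infima on each side; combined with completeness, the Jordan--von Neumann theorem will promote $\tilde{\mathcal{A}}_{w^2}$ to a Hilbert space.

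The main obstacle is the completeness argument: a naive sum of the individual rank bounds yields only a positive constant in $n$, which is not $o(n)$. The simultaneous diagonal choice of $n_k$, which produces the tail factor $(K(n) - l)\,2^{-K(n)} \to 0$, is what makes the proof go through; the Hilbert space verification is similarly delicate because $\limsup$ does not interact cleanly with addition, so one must be careful to apply the Frobenius parallelogram identity pointwise in $n$ before extracting the $\limsup$ and infimum.
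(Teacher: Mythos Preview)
Your seminorm verification and the submultiplicativity argument match the paper's. Your derivation of the $C^*$-identity from Theorem~\ref{sigma} via $\sigma_i(A_n^*A_n)=\sigma_i(A_n)^2$ is correct and in fact more explicit than the paper, which simply asserts the $C^*$-structure without checking the identity.

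For completeness, both you and the paper build the limit as $A_n := (\text{a term of the Cauchy sequence with index depending on } n)$, but the bookkeeping is genuinely different. The paper decomposes the \emph{direct} jump $B_{n,m+i}-B_{n,m}$ into a single low-rank plus small-norm pair, so only one rank needs to be controlled; it tracks a doubly-indexed family $n_{i,m}$ with the monotonicity $n_{i,m+1}>n_{i+1,m}$ to make this work. You instead telescope consecutive differences and must bound a \emph{sum} of ranks, which your simultaneous choice $\mathrm{rank}(R_n^{(j)})/n<2^{-k}$ for all $j\le k$, $n\ge n_k$ handles via the factor $(K(n)-l)\,2^{-K(n)}\to 0$. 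Both arguments are valid; the paper's avoids the rank-accumulation issue entirely, while yours is more transparent and adapts to the $q_{w^p}$ case without needing an analogue of Theorem~\ref{sigma}.

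There is, however, a genuine gap in your Hilbert-space step (and the paper gives no proof of this claim either). The Frobenius parallelogram identity does \emph{not} survive the passage to $\limsup$ and infimum, precisely because $\limsup$ is only subadditive. Concretely, take $A_n=I_n$ when $n\equiv 0\pmod 4$ and $A_n=0$ otherwise, and $B_n=I_n$ when $n\equiv 1\pmod 4$ and $B_n=0$ otherwise. Then $q_{w^2}(\{A_n\})=q_{w^2}(\{B_n\})=q_{w^2}(\{A_n+B_n\})=q_{w^2}(\{A_n-B_n\})=1$, so the parallelogram law fails ($2\ne 4$). Thus your proposed route cannot be completed for $\tilde{\mathcal A}_{w^2}$ as stated; the Hilbert structure does hold on the GLT subspace $\tilde{\mathcal G}^2$, but only via the isometry with $L^2(D)$ established later in the paper.
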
 
	\begin{proof}
		\sloppy Here we prove only the case of $\tilde{\mathcal{A}}_w$ and  the other case is similar. First we fix some notations. Let $q_A= q_w(\{A_n\}_n), q_B = q_w(\{B_n\}_n), q_{A+B}=q_w(\{A_n\}_n+\{B_n\}_n)$ and $q_{AB}=q_w(\{A_nB_n\}_n)$. From the definition of $q_w$, for every $m\in\mathbb{N}$, there exist four matrix  sequences $\{R_{n,m}^{A}\}$,$\{N_{n,m}^{A}\}$,
		$\{R_{n,m}^{B}\}$,$\{N_{n,m}^{B}\}$ such that
		$\{R_{n,m}^{A}\}+\{N_{n,m}^{A}\}=\{A_n\}_n,\,\,\, \{R_{n,m}^{B}\}+\{N_{n,m}^{B}\}=\{B_n\}_n$, and
		$$\limsup_{n\to \infty}\|N_{n,m}^A\|\leq q_A+\frac{1}{m},\,\,\,\limsup_{n\to \infty}\|N_{n,m}^B\|\leq q_B+\frac{1}{m}.$$
		Also, $\displaystyle \lim_{n\to \infty}\frac{rank (R_{n,m}^{A})}{n}=\lim_{n\to \infty}\frac{rank (R_{n,m}^{A})}{n}=0$.\\
		Now we verify the axioms of seminorm, the non negativity and $q_w(\{O_n\}_n)=0$ are trivial. For triangular inequality,  
		\begin{equation*}
			\begin{aligned}
				q_{A+B}\,\,&\leq \limsup_{n\to \infty}\|N_{n,m}^{A}+N_{n,m}^{B}\|\\
				&\leq \limsup_{n\to \infty}\|N_{n,m}^{A}\|+\limsup_{n\to \infty}\|N_{n,m}^B\|\\
				&\leq q_A+q_B+\frac{2}{m}.
			\end{aligned}
		\end{equation*}
		Thus, $q_{A+B}\leq q_A+q_B$.\\
		$q_{\alpha A}= |\alpha|q_{A}, \alpha \in \mathbb{C}$ is straight forward. 
		
		Hence $q_w$ is a seminorm in $\mathcal{A}_w$. Then the function  $\tilde{q}_w : \tilde{\mathcal{A}}_w \times \tilde{\mathcal{A}}_w \to \mathbb{R}$  defined as $$\tilde{q}_w(\{A_n\}_n+L_w)=q_w(\{A_n\}_n),$$
		becomes a norm on $\tilde{\mathcal{A}}_w$.\\
		For the Banach algebra inequality, we consider
		$$\{A_nB_n\}_n =\{R_{n,m}^{A}R_{n,m}^{B}+R_{n,m}^{A}N_{n,m}^{B}+N_{n,m}^{A}R_{n,m}^{B}\}_n+\{N_{n,m}^{A}N_{n,m}^{B}\}_n.$$
		Here $\lim_{n\to \infty}\frac{1}{n} rank(R_{n,m}^{A}R_{n,m}^{B}+R_{n,m}^{A}N_{n,m}^{B}+N_{n,m}^{A}R_{n,m}^{B})= 0.$ 
		Then,
		\begin{equation*}
			\begin{aligned}
				q_w(\{A_nB_n\}_n)&\leq \limsup_{n\to \infty}\|N_{n,m}^{A}N_{n,m}^{B}\|\\
				&\leq \limsup_{n\to \infty}\|N_{n,m}^{A}\|\limsup_{n\to \infty}\|N_{n,m}^B\|\\
				&\leq (q_w(\{A_n\}_n)+\frac{1}{m})(q_w(\{B_n\}_n)+\frac{1}{m}).
			\end{aligned}
		\end{equation*}
		Thus, $q_w(\{A_nB_n\}_n)\leq q_w(\{A_n\}_n)q_w(\{B_n\}_n)$ ( Note that this result is not true for $\tilde{\mathcal{A}}_{w^p}$).
		
		Finally, we  prove the completeness of $\tilde{\mathcal{A}}_w$. 
		Let $\{\{B_{n,m}\}_n+L_w\}_m$ be a Cauchy sequence in $\tilde{\mathcal{A}}_w$. It suffices to show the convergence of a subsequence. We can extract a subsequence and name it as $\{\{B_{n,m}\}_n+L_w\}_{m}$ itself such that
		$$\tilde{q}_w(\{B_{n,m+1}-B_{n,m}\}_n+L_w)\leq 2^{-m};\, m=1,2,3,\ldots$$
		Then,
		$$\tilde{q}_w(\{B_{n,m+i}-B_{n,m}\}_n+L_w)=q_w(\{B_{n,m+i}-B_{n,m}\}_n)\leq 2^{(1-m)};\, i=1,2,3,\ldots$$
		Also we can construct two matrix sequences $\{R_{n,i}^m\}_n$ and $\{N_{n,i}^m\}_n$ such that 
		$$B_{n,m+i}-B_{n,m}=R_{n,i}^m+N_{n,i}^m,$$
		where $\|N_{n,i}^{m}\|\leq2^{-(m-1)}$ and $\displaystyle \limsup_{n\to \infty} \frac{rank(R_{n,i}^m)}{n} =0$.

		We can find a strictly increasing sequence of positive integers$\{n_{i,m}\}_i$ such that for all $n\geq n_{i,m}$,$\frac{rank(R_{n,i}^m)}{n}<\frac{1}{i}.$
		Also we choose $\{\{n_{i,m}\}_m\}_i$ such that
		\begin{equation}\label{inequality}
			n_{i,m+1}>n_{i+1,m}.
		\end{equation}
		This inequality helps us to obtain the required estimate. Since $n_{i,m+1}>n_{i+1,m}>n_{i,m}$, for a fixed $i$, $\{n_{i,m}\}_m$ is an increasing sequence.\\
		Now consider  $\{n_{2,m}\}_m$ and construct a matrix sequence $\{A_n\}_n$ in a such a way that $A_n=B_{n,j+1}$, whenever $n_{2,j-1}\leq n<n_{2,j}$.\\
		Consider $A_n-B_{n,m}$; for $n_{2,m+i-1}\leq n<n_{2,m+i},$
		$$A_n-B_{n,m}=B_{n,m+i+1}-B_{n,m}=R_{n,i+1}^{m}+N_{n,i+1},$$
		where $\|N_{n,i+1}^m\|\leq 2^{(1-m)}$ and $\displaystyle \frac{rank(R_{n,i+1}^m)}{n}<\frac{1}{i+1}$, for all $n\geq n_{i+1,m}.$\\
		Here by inequality \ref{inequality}, $n\geq n_{2,m+i-1 }>n_{i+1,m}$, then
		$q_w(\{A_n\}_n-\{B_{n,m}\}_n)<2^{(1-m)}$.\\
		Hence $\displaystyle \lim_{m\to \infty} \tilde{q}_w(\{A_n\}_n-\{B_{n,m}\}_n+L_w)=\lim_{m\to \infty} {q}_w(\{A_n\}_n-\{B_{n,m}\}_n) =0$.
		
		Thus, $\tilde{\mathcal{A}}_w$ and $\tilde{\mathcal{A}}_{w^p}$ are Banach spaces.  $\tilde{\mathcal{A}}_w$ form a $C^*$ algbera with usual complex conjugate transpose of the matrix as the involution, that is $\{A_n\}_n^*=\{A_n^*\}_n$.
	\end{proof}
	\sloppy The convergence of a sequence $\{\{B_{n,m}\}_m\}_n$ to $\{A_n\}_n$ in the topology induced by $q_w$ and $q_{w^p}$ are denoted by  $\{\{B_{n,m}\}_n\}_m\xrightarrow{q_w}\{A_n\}_n$ and $ \{\{B_{n,m}\}_n\}_m\xrightarrow{q_{w^p}}\{A_n\}_n$ respectively.
	
	Now we recall a lemma from \cite{stefano} that is useful to obtain the relation between a.c.s. convergence and the convergence with respect to $q_w$ and $ q_{w^p}$.
	\begin{lemma}(Theorem 4.1 \cite{stefano})\label{lemma1}
		Let $\{A_n\}_n$ be a matrix sequence and let $\{\{B_{n,m}\}_n\}_m$ be a sequence of matrix sequences. Then the following conditions are equivalent
		\begin{enumerate}
			\item  $\{\{B_{n,m}\}_n\}_m$ is an a.c.s. for $\{A_n\}_n$
			\item $p(\{A_n-B_{n,m}\}_n)\to 0 \,\,\,as\,\,\, m\to \infty$
		\end{enumerate}
	\end{lemma}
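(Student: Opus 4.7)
The plan is to prove the equivalence by unwinding the definitions on both sides and exploiting the characterization of $P(A_n)$ as the infimum over rank/norm decompositions (or, via Theorem \ref{p(a)}, as a quantity directly governed by the singular values of $A_n$).

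For the direction (1) $\Rightarrow$ (2), I would start with an a.c.s.\ decomposition: by assumption, for every $m$ there exists $n_m$ such that for $n \geq n_m$,
\[
A_n - B_{n,m} = R_{n,m} + N_{n,m}, \qquad \mathrm{rank}(R_{n,m}) \leq c(m)\,n, \qquad \|N_{n,m}\| \leq \omega(m),
\]
with $c(m), \omega(m) \to 0$. Plugging this decomposition directly into the definition of $P$ gives $P(A_n - B_{n,m}) \leq c(m) + \omega(m)$ for all $n \geq n_m$. Taking $\limsup_{n \to \infty}$ yields $p(\{A_n - B_{n,m}\}_n) \leq c(m) + \omega(m)$, and sending $m \to \infty$ gives (2). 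This direction is routine.

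For (2) $\Rightarrow$ (1), which I expect to be the main technical step, the idea is to extract an a.c.s.\ decomposition from the vanishing of $p$. Pick a sequence $\delta_m \downarrow 0$ such that $p(\{A_n - B_{n,m}\}_n) < \delta_m/2$; by the definition of $p$ as a limsup, there exists $n_m$ with $P(A_n - B_{n,m}) < \delta_m$ for every $n \geq n_m$. By the definition of $P$ as an infimum (or directly via Theorem \ref{p(a)}), we can then select for each such $n$ a decomposition
\[
A_n - B_{n,m} = R_{n,m} + N_{n,m}, \qquad \frac{\mathrm{rank}(R_{n,m})}{n} + \|N_{n,m}\| < \delta_m.
\]
Since both summands are non-negative, this single inequality gives both $\mathrm{rank}(R_{n,m}) \leq \delta_m\, n$ and $\|N_{n,m}\| \leq \delta_m$ simultaneously. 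Setting $c(m) = \omega(m) = \delta_m$, we recover the a.c.s.\ definition verbatim.

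The main subtlety — and the place where I expect to have to argue carefully — is the quantifier management in (2) $\Rightarrow$ (1): the definition of a.c.s.\ demands functions $c(m)$ and $\omega(m)$ that depend only on $m$ (not on $n$), while the decomposition realizing $P(A_n - B_{n,m})$ a priori depends on $n$ as well. The fix is the observation above that bounding the sum $\mathrm{rank}(R_{n,m})/n + \|N_{n,m}\|$ uniformly in $n \geq n_m$ by a single quantity $\delta_m$ automatically bounds each summand uniformly in $n$; so the $n$-dependence of the decomposition is absorbed into the data $(R_{n,m}, N_{n,m})$, while the controlling constants $c(m), \omega(m)$ remain functions of $m$ alone, as required.
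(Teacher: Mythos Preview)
Your argument is correct. Note, however, that the paper does not actually prove this lemma: it is quoted verbatim as Theorem~4.1 of \cite{stefano} and used as a black box, so there is no in-paper proof to compare against. Your write-up supplies precisely the standard proof one finds in that reference, and the only cosmetic point is that the a.c.s.\ definition does not require $c(m)$ and $\omega(m)$ to be monotone, so you need not insist on $\delta_m \downarrow 0$; taking $\delta_m \to 0$ suffices.
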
 
	The next theorem gives the comparison of these convergence notions.
	\begin{theorem}\label{thm2}
		$ q_{w}$ convergence $\Longrightarrow$ $q_{w^p}$ convergence $\Longrightarrow$ a.c.s. convergence.
	\end{theorem}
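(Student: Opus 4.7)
The plan is to handle the two implications separately. The first, $q_w$ convergence $\Rightarrow$ $q_{w^p}$ convergence, should reduce to a pointwise comparison of the two seminorms. The second is the substantial one: by Lemma \ref{lemma1}, it suffices to show that $q_{w^p}(\{A_n-B_{n,m}\}_n)\to 0$ forces $p(\{A_n-B_{n,m}\}_n)\to 0$ as $m\to\infty$, and this should follow from a quantitative bound of the form $p(\{C_n\}_n)\le C\,q_{w^p}(\{C_n\}_n)^{\alpha}$ for some explicit exponent $\alpha>0$.

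For the first implication, I would invoke the standard relation between the Schatten-$p$ and the spectral norm: for any $N_n\in M_n(\mathbb{C})$,
$$\|N_n\|_{sp}^{\,p}=\sum_{i=1}^n\sigma_i(N_n)^p\le n\,\|N_n\|^p,$$
so $\|N_n\|_{sp}/n^{1/p}\le \|N_n\|$. Hence every decomposition $A_n=R_n+N_n$ admissible in the definition of $q_w$ is also admissible for $q_{w^p}$ with an objective value at most as large, giving $q_{w^p}(\{A_n\}_n)\le q_w(\{A_n\}_n)$. Applied to the sequence $\{A_n-B_{n,m}\}_n$, this yields the first implication immediately.

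For the second implication, fix $\epsilon>q_{w^p}(\{C_n\}_n)$ with $C_n:=A_n-B_{n,m}$, and choose a decomposition $C_n=R_n+N_n$ with $r_n:=\mathrm{rank}(R_n)=o(n)$ and $\|N_n\|_{sp}^{\,p}<\epsilon^p n$ for all large $n$. A Markov-type step bounds, for every $t>0$,
$$s_n(t):=\#\{i:\sigma_i(N_n)\ge t\}\le \frac{\|N_n\|_{sp}^{\,p}}{t^p}<\frac{\epsilon^p\,n}{t^p}.$$
Combined with Weyl's inequality for singular values of a sum, $\sigma_{r_n+j}(C_n)\le\sigma_{r_n+1}(R_n)+\sigma_j(N_n)=\sigma_j(N_n)$, and selecting $k=r_n+s_n(t)$ in the characterization of Theorem \ref{p(a)}, we obtain
$$P(C_n)\le \frac{k}{n}+\sigma_{k+1}(C_n)\le \frac{r_n}{n}+\frac{\epsilon^p}{t^p}+t.$$
Taking $\limsup_{n\to\infty}$ kills the $r_n/n$ term, and the optimal choice $t=\epsilon^{p/(p+1)}$ gives $p(\{C_n\}_n)\le 2\epsilon^{p/(p+1)}$. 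Letting $\epsilon\downarrow q_{w^p}(\{C_n\}_n)$ produces the quantitative bound $p(\{A_n-B_{n,m}\}_n)\le 2\,q_{w^p}(\{A_n-B_{n,m}\}_n)^{p/(p+1)}\to 0$ as $m\to\infty$, and Lemma \ref{lemma1} then delivers a.c.s.\ convergence.

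The main obstacle is assembling the Chebyshev/Weyl/optimization chain with the correct exponent: one must track the inequalities carefully so that the $r_n/n\to 0$ contribution is cleanly absorbed by the $\limsup$, and the threshold $t$ must be tuned to balance the two remaining terms. Once this is in place, the rest is routine.
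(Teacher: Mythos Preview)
Your argument is correct. The paper's own proof is the single line ``immediate from the definition and Lemma~\ref{lemma1}'', so your write-up is in fact \emph{more} detailed than what the paper provides: the inequality $\|N_n\|_{sp}/n^{1/p}\le\|N_n\|$ is exactly the ``definition'' step behind the first implication, and your Chebyshev/Weyl threshold argument together with Theorem~\ref{p(a)} is the natural way to turn a Schatten-$p$ bound on $N_n$ into a small-rank-plus-small-spectral-norm splitting, which is what the paper leaves implicit in its appeal to Lemma~\ref{lemma1}. The quantitative bound $p(\{C_n\}_n)\le 2\,q_{w^p}(\{C_n\}_n)^{p/(p+1)}$ you extract is a nice byproduct that the paper does not record.
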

	\begin{proof}
		The	proof is immediate from the definition and Lemma \ref{lemma1}. 
	\end{proof}
	\begin{remark}\normalfont
		Reverse implication is false, as we shall see in the following example. Also $q_{w^p}$ convergence implies $q_{w^q}$ convergence if $1\leq p<q<\infty$.
	\end{remark}

	Now we give an example of a sequence of matrix sequences for which the converse of Theorem \ref{thm2} is false.
	
	\begin{example}\normalfont
		Let $B_{n,m}$ be the diagonal matrix with its first $\lfloor \frac{n}{m}\rfloor$ diagonal entries  $1$ and others  $0$.
		\begin{equation*}
			\begin{aligned}
				p(\{B_{n,m}\}_n)&=\inf\left\{\limsup_{n\to \infty}\left\{\frac{rank\, R}{n}+\|N\|:\,R+N=B_{n,m}\right\}\right\}\\
				&\leq \limsup_{n\to \infty} \frac{rank \,B_{n,m}}{n}\\
				&=\frac{\lfloor\frac{n}{m}\rfloor}{n}\\
				&\leq\frac{1}{m}
			\end{aligned}
		\end{equation*}
		Then by lemma \ref{lemma1}, $\{\{B_{n,m}\}_n\}_m \xrightarrow{a.c.s} \{O_n\}_n$.
		But $q_w(\{B_{n,m}\}_n)$ is 1 for all $m$. Hence $\{\{B_{n,m}\}_n\}_m$ does not converge to $\{O_n\}_n$ in $\tilde{\mathcal{A}}_w$.
	\end{example} 
	\section{Main Results: GLT sequences and $L^{p}$ spaces}
	In this section, we prove our main result, Theorem \ref{isometry}. Recall the definition of GLT sequences (see definition \ref{GLT defn}). 
	Let $\mathcal{G}^\infty$ and $\mathcal{G}^p$ be the spaces defined as follows.
	$$\mathcal{G}^\infty=\{\{A_n\}_n\in\mathcal{A}_w: \{A_n\}_n\sim_{GLT}f\},\,\,\,\mathcal{G}^p=\{\{A_n\}_n\in\mathcal{A}_{w^p}: \{A_n\}_n\sim_{GLT}f\}.$$
	Let $$Z=\{\{A_n\}_n\in\mathcal{G}^{\infty}: q_w(\{A_n\}_n)=0\},$$  $$Z^{p}=\{\{A_n\}_n\in\mathcal{G}^{p}: q_{w^p}(\{A_n\}_n)=0\},$$
	and $\tilde{\mathcal{G}}^\infty=\mathcal{G}^{\infty}/Z$, $\tilde{\mathcal{G}^p}=\mathcal{G}^{p}/Z^p$ be the quotient spaces  of $\mathcal{G}^{\infty}$ and $\mathcal{G}^p$ respectively.\\
	Following is an example of a matrix sequence that belongs to $\tilde{\mathcal{G}}^1$ but not to $\tilde{\mathcal{G}}^2$.
	\begin{example}\normalfont
		Let $a:[0,1]\to \mathbb{R}$ be a function defined by 
		$$a(x)=\left\{\begin{array}{ll}
			\frac{1}{\sqrt{x}}& \mbox{if $0 < x \leq 1$}\\
			0 & \mbox{if $x=0.$}
		\end{array}\right. ,$$ $g:[-\pi,\pi]\to \mathbb{C}$ is constant function 1 and $\{A_n\}_n$ be the matrix sequence given by
		$$A_n=\begin{pmatrix}
			\sqrt{n}&0&0&\cdots&0\\
			0&\sqrt{\frac{n}{2}}&0&\cdots&0\\
			0&0&\sqrt{\frac{n}{3}}&\cdots&0\\
			\vdots&\vdots&\vdots&\ddots&\vdots\\
			0&0&0&\cdots&1
			
		\end{pmatrix}.$$
		The matrix sequence $\{A_n\}_n$ belongs to $\tilde{\mathcal{G}}^1 $ but not $\tilde{\mathcal{G}}^2 $ and its symbol is the function $f:[0,1]\times[-\pi,\pi]\to \mathbb{R}$ defined by $f=a\otimes g$.
	\end{example} 
	We recall some algebraic properties of the space of GLT sequences from \cite{stefano};
	\begin{enumerate}
		\item Suppose that $\{A_n\}_n\sim_{GLT}f$ and $\{B_n\}_n\sim_{GLT}g$. Then,\label{tog1}
		\begin{enumerate}
			\item $\{A_{n}^{*}\}_n\sim_{GLT}\bar{f}$,
			\item$\{\alpha A_n+\beta B_n\}_n\sim_{GLT}\alpha f+\beta g$, for all $\alpha, \beta \in \mathbb{C},$
			\item $\{A_nB_n\}_n\sim_{GLT} fg$.
			\item if $\{A_n\}_n\sim_{GLT} h$ then $f=h$ a.e
		\end{enumerate}
		\item For all measurable function $f$ defined on $D:=[0,1]\times [-\pi,\pi]$, there exists a matrix sequence $\{A_n\}_n$ such that $\{A_n\}_n\sim_{GLT}f$.\label{tog2}
	\end{enumerate}
	Define a function $\phi_p:\tilde{\mathcal{G}}^p\to L^{p}(D), 1\leq p \leq \infty,$ such that whenever \,$\{A_n\}_n\sim_{GLT}f$,
	\begin{equation*}
		\begin{aligned}
			\phi_{p}(\{A_n\}_n)&=f&\text{if $p=\infty$}\\
			&=(2\pi)^{\frac{1}{p}}f&\text{if $p\neq \infty$.}	
		\end{aligned}
	\end{equation*}
	$\phi_p$ is well defined by the property (d) of (\ref{tog1}).
	Now we are in a position to prove our main result Theorem \ref{isometry}. In fact we prove that 	$\phi_p$ is an isometric isomorphism between  $\tilde{\mathcal{G}}^p$ and $ L^{p}(D), 1\leq p \leq \infty.$ This is a consequence of the above listed properties and the following couple of lemmas.
	\begin{lemma}\label{eon1}
		Let $\{A_n\}_n\in \mathcal{A}_w$ and $\{A_n\}_n\sim_{\sigma}f$, then $q_w(\{A_n\}_n)=\|f\|_{\infty}$.
	\end{lemma}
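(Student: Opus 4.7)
The plan is to exploit Theorem \ref{sigma}'s characterization
$$q_w(\{A_n\}_n) = \inf\left\{\alpha \geq 0 : \lim_{n\to\infty} \frac{\#(\sigma(A_n) \geq \alpha)}{n} = 0\right\}$$
together with the parallel characterization $\|f\|_\infty = \inf\{\alpha \geq 0 : |\{x \in D : |f(x)| > \alpha\}| = 0\}$, so that both sides are thresholds governing the same type of level-set count. The bridge between them is the distribution identity $\frac{1}{n}\sum_i F(\sigma_i(A_n)) \to \frac{1}{2\pi}\int_D F(|f|)\,dx$ for $F \in C_c(\mathbb{R})$, applied to suitably designed plateau test functions. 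One preliminary observation I would record first: for every $\beta > q_w(\{A_n\}_n)$ one has $\lim_n \#(\sigma(A_n) \geq \beta)/n = 0$; this is immediate from Theorem \ref{sigma} by monotonicity of the count in $\beta$, and the hypothesis $\{A_n\}_n \in \mathcal{A}_w$ ensures that such $\beta$ exist since $q_w(\{A_n\}_n) < \infty$.

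For the inequality $q_w(\{A_n\}_n) \leq \|f\|_\infty$, I would fix an arbitrary $\alpha > \|f\|_\infty$, choose $\beta \in (\|f\|_\infty, \alpha)$, and pick $M$ larger than both $\alpha$ and $q_w(\{A_n\}_n)$. Taking a plateau function $F \in C_c(\mathbb{R})$ with $0 \leq F \leq 1$, $F \equiv 1$ on $[\alpha, M]$ and $\mathrm{supp}(F) \subset [\beta, M+1]$, I get $F(|f|) = 0$ a.e.\ since $|f| \leq \|f\|_\infty < \beta$ a.e., so the distribution identity gives $\frac{1}{n}\sum_i F(\sigma_i(A_n)) \to 0$. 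Combining
$$\frac{\#\{i : \alpha \leq \sigma_i(A_n) \leq M\}}{n} \leq \frac{1}{n}\sum_i F(\sigma_i(A_n))$$
with $\#(\sigma(A_n) > M)/n \to 0$ (the preliminary observation, since $M > q_w(\{A_n\}_n)$) yields $\lim_n \#(\sigma(A_n)\geq \alpha)/n = 0$, hence $q_w(\{A_n\}_n) \leq \alpha$. Letting $\alpha \downarrow \|f\|_\infty$ closes this direction.

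For the reverse inequality I would argue by contradiction: suppose $q_w(\{A_n\}_n) < \alpha < \|f\|_\infty$. Then $|\{|f| > \alpha\}| > 0$, so I can pick $M$ large enough that the truncated level set $E := \{\alpha < |f| \leq M\}$ has positive measure, and pick $\beta \in (q_w(\{A_n\}_n), \alpha)$. Using a plateau function $F \in C_c(\mathbb{R})$ of the same shape as above (with these new $\alpha, \beta, M$), the distribution identity produces
$$\frac{1}{n}\sum_i F(\sigma_i(A_n)) \;\longrightarrow\; \frac{1}{2\pi}\int_D F(|f|)\,dx \;\geq\; \frac{|E|}{2\pi} \;>\; 0,$$
whereas the fact that $F$ vanishes on $[0,\beta]$ forces $\frac{1}{n}\sum_i F(\sigma_i(A_n)) \leq \#(\sigma(A_n)\geq \beta)/n \to 0$ by the preliminary observation, a contradiction.

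The only real subtlety is that no bound is a priori available on the largest singular values of $A_n$, yet the $\sim_\sigma$ hypothesis only controls compactly supported test functions. This is precisely where the hypothesis $\{A_n\}_n \in \mathcal{A}_w$ is indispensable: it supplies a cut-off $M$ beyond which only a vanishing proportion of singular values live, allowing me to truncate the support of $F$ at $M+1$ without losing control of $\#(\sigma(A_n)\geq \alpha)/n$. Beyond this truncation step, the argument is really a bookkeeping exercise comparing essential suprema against plateau test functions, so I expect no further obstacles.
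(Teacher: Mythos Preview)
Your proof is correct and follows the same overall strategy as the paper: invoke Theorem~\ref{sigma} to express $q_w$ as a threshold for asymptotic singular-value counts, and use the distributional identity $\frac{1}{n}\sum_i F(\sigma_i(A_n)) \to \frac{1}{2\pi}\int_D F(|f|)$ with carefully chosen plateau test functions to compare that threshold to $\|f\|_\infty$.

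The implementation differs in one respect. The paper places its test function near the origin, taking $F$ continuous with compact support satisfying $\chi_{[0,l+\epsilon]} \le F \le \chi_{[-\epsilon,l+2\epsilon]}$ (so $F$ captures \emph{small} singular values), obtains $\liminf_n \frac{1}{n}\#(\sigma_i(A_n)\le l+2\epsilon) \ge 1$ directly, and then complements. Your test functions are instead supported on $[\beta,M+1]$ away from the origin, counting \emph{large} singular values directly; this forces you to truncate at a finite ceiling $M$ and handle the overflow $\#(\sigma_i(A_n)>M)/n$ separately via your preliminary observation. The paper's ``bump near zero'' trick is marginally slicker because it sidesteps that truncation step, while your version has the merit of isolating exactly where the hypothesis $\{A_n\}_n\in\mathcal{A}_w$ (i.e.\ $q_w<\infty$) enters. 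Both routes are sound.
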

	\begin{proof}
		Suppose $\|f\|_{\infty}= \underset{x\in D}{ess\,sup}\,|f(x)|=l$.
		
		Then by the definition of essential supremum, for any $\epsilon>0$, 
		$$\mu\{x:|f(x)|\geq l+\epsilon\}=0,$$
		where $\mu$ is the Lebesgue measure. Also it is given that $\{A_n\}_n\sim_{\sigma}f$, then 
		$$\lim_{n\to \infty}\frac{1}{n}\sum_{i=1}^{n}F(\sigma_i(A_n))=\frac{1}{2\pi}\int_{D}F(|f(x)|)dx$$
		for every $F:\mathbb{R}\to \mathbb{C}$ continuous function with compact support and singular values $\sigma_i(A_n)$ arranged in non increasing order; $\sigma_1(A_n)\geq \sigma_2(A_n)\geq\cdots\geq\sigma_n(A_n)$.
		Consider a real valued continuous function $F$ with compact support, such that $\chi_{[\epsilon,l+2\epsilon]}\geq F(x)\geq\chi_{[0,l+\epsilon]}$, then
		$$\lim_{n\to \infty}\frac{1}{n}\sum_{i=1}^{n}F(\sigma_i(A_n))\leq \liminf_{n\to \infty}\frac{1}{n}\#(\sigma_i(A_n)\leq l+2\epsilon) \,\,\text{and}$$
		$$\int_{D}F(|f(x)|)dx\geq \mu\{x:|f(x)|\leq l+\epsilon\}.$$
		Then,
		$$\liminf_{n\to \infty}\frac{1}{n}\#(\sigma_i(A_n)\leq l+2\epsilon)\geq \frac{1}{2\pi}\mu\{x:|f(x)|\leq l+\epsilon\},$$
		$$\liminf_{n\to \infty}\frac{1}{n}\{n-(\#\sigma_i(A_n)> l +2\epsilon)\}\geq \frac{1}{2\pi}(2\pi-\mu\{x:|f(x)|> l+\epsilon\}),$$
		$$1-\limsup_{n\to \infty}\frac{1}{n}(\#\sigma_i(A_n)> l +2\epsilon)\geq 1-\frac{1}{2\pi}\mu\{x:|f(x)|> l+\epsilon\},$$
		$$\limsup_{n\to \infty}\frac{1}{n}\#(\sigma_i(A_n)> l +2\epsilon)\leq \frac{1}{2\pi}\mu\{x:|f(x)|> l+\epsilon\} =0.$$
		
		By Theorem \ref{sigma}, $q_w(\{A_n\}_n)\leq l+2\epsilon.$
		Thus, $q_w(\{A_n\}_n)\leq \|f\|_{\infty}$.\\
		To prove the other inequality, suppose $q_w(\{A_n\})=k$. By Theorem \ref{sigma}, for $\epsilon>0$, $\underset{n\to \infty}{\lim}\frac{\#(\sigma(A_n)>k+\epsilon)}{n}=0.$
		
		Given that $\{A_n\}_n\sim_{\sigma}f$. Then,
		$$\lim_{n\to \infty}\frac{1}{n}\sum_{i=1}^{n}F(\sigma_i(A_n))=\frac{1}{2\pi}\int_{D}F(|f(x)|)dx.$$
		for every $F:\mathbb{R}\to \mathbb{C}$  continuous function with compact support. Consider such a function $F:\mathbb{R}\to \mathbb{C}$ such that $\chi_{[-\epsilon,k+2\epsilon]}\geq F\geq\chi_{[0,k+\epsilon]}$.
		$$\liminf_{n\to \infty}\frac{\#(\sigma(A_n)\leq k+\epsilon)}{n}\leq\lim_{n\to \infty}\frac{1}{n}\sum_{i=1}^{n}F(\sigma_i(A_n)),$$
		$$\int_{D}F(|f(x)|)dx\leq \mu\{x:|f(x)|\leq k+2\epsilon\},$$
		$$\liminf_{n\to \infty}\frac{1}{n}\#(\sigma_i(A_n)\leq k +\epsilon)\leq \frac{1}{2\pi}\mu\{x:|f(x)|\leq k+2\epsilon\},$$
		$$\frac{1}{2\pi}\mu\{x:|f(x)|> k+2\epsilon\} \leq\limsup_{n\to \infty}\frac{1}{n}\#(\sigma_i(A_n)> k +\epsilon)=0. $$
		Thus, $\|f\|_\infty\leq k+2\epsilon$ and $\|f\|_\infty\leq q_w(\{A_n\}_n)$.
	\end{proof}
	\begin{cor}
		Let $\{A_n\}_n\sim_\sigma f$. Then $\{A_n\}_n\in\mathcal{A}_w$ if and only if $f\in L^{\infty}(D)$.
	\end{cor}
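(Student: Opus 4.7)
The plan is to derive this corollary as an essentially immediate consequence of Lemma \ref{eon1}, which establishes the identity $q_w(\{A_n\}_n)=\|f\|_\infty$ whenever $\{A_n\}_n\sim_\sigma f$ lies in $\mathcal{A}_w$. Since membership in $\mathcal{A}_w$ is by definition equivalent to $q_w(\{A_n\}_n)<\infty$, and membership in $L^\infty(D)$ is by definition equivalent to $\|f\|_\infty<\infty$, the equivalence should drop out of the identity of these two quantities.

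For the forward direction, I would assume $\{A_n\}_n\in\mathcal{A}_w$, so that $q_w(\{A_n\}_n)<\infty$ by definition. Then Lemma \ref{eon1} applies directly and gives $\|f\|_\infty=q_w(\{A_n\}_n)<\infty$, which is exactly the statement $f\in L^\infty(D)$.

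For the reverse direction, the point requiring a little care is that Lemma \ref{eon1} is stated with $\{A_n\}_n\in\mathcal{A}_w$ as a hypothesis, so one cannot invoke it verbatim before establishing membership in $\mathcal{A}_w$. However, inspection of the first half of that lemma's proof (the inequality $q_w(\{A_n\}_n)\leq\|f\|_\infty$) shows that it only uses the distributional assumption $\{A_n\}_n\sim_\sigma f$ together with Theorem \ref{sigma}; the finiteness of $q_w$ is not used in that half. So if $f\in L^\infty(D)$, the same computation gives $q_w(\{A_n\}_n)\leq\|f\|_\infty<\infty$, which by definition places $\{A_n\}_n$ in $\mathcal{A}_w$.

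I don't foresee a real obstacle here; the only thing to check carefully is that the proof of the first inequality in Lemma \ref{eon1} is genuinely valid without a priori knowing $\{A_n\}_n\in\mathcal{A}_w$. If one prefers to avoid any such re-reading, one can simply repeat the argument: take $\alpha=\|f\|_\infty+2\epsilon$, test the distribution $\{A_n\}_n\sim_\sigma f$ against a continuous compactly supported $F$ sandwiched between $\chi_{[0,\|f\|_\infty+\epsilon]}$ and $\chi_{[-\epsilon,\|f\|_\infty+2\epsilon]}$, and conclude via Theorem \ref{sigma} that $q_w(\{A_n\}_n)\leq\|f\|_\infty+2\epsilon$. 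Letting $\epsilon\to 0$ gives the desired finiteness.
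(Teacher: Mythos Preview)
Your proposal is correct and matches the paper's intended approach: the paper states this corollary immediately after Lemma~\ref{eon1} with no proof, treating it as a direct consequence. Your observation that the reverse direction requires isolating the inequality $q_w(\{A_n\}_n)\leq\|f\|_\infty$ from the proof of Lemma~\ref{eon1} (since that lemma formally assumes $\{A_n\}_n\in\mathcal{A}_w$) is a valid point of care that the paper leaves implicit; your verification that this half of the argument uses only $\{A_n\}_n\sim_\sigma f$ and Theorem~\ref{sigma} is accurate.
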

	\begin{lemma}\label{eon2}
		Let $\{A_n\}_n\in \mathcal{A}_{w^p}$ and $\{A_n\}_n\sim_{\sigma}f$. Then $q_{w^p}(\{A_n\}_n)=\frac{1}{(2\pi)^{(1/p)}}\|f\|_p, 1\leq p<\infty$.
	\end{lemma}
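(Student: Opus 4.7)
The plan is to prove both inequalities in $q_{w^p}(\{A_n\}_n) = \|f\|_p/(2\pi)^{1/p}$ separately, paralleling the proof of Lemma~\ref{eon1} but with Schatten $p$-norm bookkeeping in place of the threshold characterization of Theorem~\ref{sigma}. The bridge from the distribution relation $\sim_\sigma f$ (which by definition applies only to continuous compactly supported test functions) to the moment-like quantity $\|N_n\|_{sp}^p/n$ will be effected through smooth cutoffs $\psi_M \uparrow 1$ applied to $F(x) = x^p$.

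For the lower bound I would invoke Weyl's singular value inequality $\sigma_{i+j-1}(X+Y) \leq \sigma_i(X) + \sigma_j(Y)$. Taking $X=N_n$, $Y=R_n$ and $j = r_n+1$ for any admissible decomposition $A_n = R_n + N_n$ with $r_n := \mathrm{rank}(R_n)$ and $r_n/n \to 0$ yields $\sigma_{r_n+i}(A_n) \leq \sigma_i(N_n)$, hence
$$\|N_n\|_{sp}^p \;\geq\; \sum_{i>r_n} \sigma_i(A_n)^p \;\geq\; \sum_{i=1}^n F(\sigma_i(A_n)) - r_n\,\|F\|_\infty$$
for every continuous compactly supported $F$ with $0 \leq F(x) \leq x^p$. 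Dividing by $n$, invoking $\{A_n\}_n\sim_\sigma f$, and using $r_n/n\to 0$ gives $\liminf_n \|N_n\|_{sp}^p/n \geq (2\pi)^{-1}\int_D F(|f|)$. Choosing $F = x^p \psi_M$ and letting $M\to\infty$ produces $\liminf_n \|N_n\|_{sp}^p/n \geq \|f\|_p^p/(2\pi)$ by monotone convergence, so the infimum over admissible decompositions yields the desired lower bound on $q_{w^p}$.

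For the upper bound I would build explicit truncation decompositions and apply a diagonal argument. For $M>0$ a continuity point of the distribution function of $|f|$, take an SVD $A_n = U_n\Sigma_n V_n^*$ and let $R_n^{(M)}$ retain the singular vectors with $\sigma_i(A_n)>M$, with $N_n^{(M)} := A_n - R_n^{(M)}$. Applying $\sim_\sigma f$ to continuous compactly supported bumps sandwiching $\chi_{(M,\infty)}$ yields $\limsup_n \mathrm{rank}(R_n^{(M)})/n \leq \mu\{|f|>M\}/(2\pi)$, while applying it to $x^p \psi_M \geq x^p \chi_{[0,M]}$ gives $\limsup_n \|N_n^{(M)}\|_{sp}^p/n \leq \|f\|_p^p/(2\pi)$. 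Since $f\in L^p$, the first right-hand side vanishes as $M\to\infty$; a diagonal construction selecting $M = M_n\uparrow\infty$ slowly enough then assembles a single admissible decomposition with $\limsup_n \|N_n\|_{sp}/n^{1/p} \leq \|f\|_p/(2\pi)^{1/p}$.

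The main obstacle is that $x\mapsto x^p$ is not compactly supported, so $\sim_\sigma f$ cannot be invoked with it directly; approximation by $x^p \psi_M$ is forced throughout. In the upper bound the diagonal choice of $M_n$ must simultaneously drive $\mathrm{rank}(R_n^{(M_n)})/n$ to zero and preserve the Schatten moment bound, which is the most delicate bookkeeping. Restricting $M$ to continuity points of the distribution function of $|f|$ (a full-measure set) is the technical ingredient that makes the bump approximations clean.
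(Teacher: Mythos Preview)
Your proposal is correct and in fact supplies the details the paper omits: the paper's entire proof of this lemma is the single sentence ``The proof is similar to Lemma~\ref{eon1}.'' Your two-inequality structure with compactly supported cutoffs against $\sim_\sigma f$ follows exactly that template, with the necessary adaptation that the threshold characterization of Theorem~\ref{sigma} (which is specific to $q_w$) is replaced by direct Schatten-$p$ bookkeeping via Weyl's inequality for the lower bound and SVD truncations plus a diagonal threshold choice for the upper bound --- precisely what ``similar'' has to mean here. One small remark: in the upper bound you implicitly use $f\in L^p(D)$ to make the diagonal construction nontrivial (so that $\|f\|_p^p/(2\pi)<\infty$); this is not part of the hypothesis but follows from your lower bound together with $\{A_n\}_n\in\mathcal{A}_{w^p}$, so it is worth saying explicitly.
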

	\begin{proof}
		The proof  is similar to Lemma \ref{eon1}. 
	\end{proof}
	\begin{proof}[Proof of Theorem \ref{isometry}:]	$\phi_p$ is an injective *-homomorphism of Banach spaces, which can be readily followed from (b) of property (\ref{tog1}) of GLT. The surjectivity follows from  the definition of $\tilde{\mathcal{G}}^p$ and property (\ref{tog2}) of GLT. Hence it is a *-isomorphism. From Lemma \ref{eon1} and Lemma \ref{eon2}, it follows that $\phi_{p}$ is an isometry. In particular, $\phi_\infty$ is a $C^*$- isomorphism.
	\end{proof}

	Theorem \ref{isometry} yields a natural isometry between the spaces $\tilde{\mathcal{G}}^p$ and $L^p(D)$, for $1\leq p \leq \infty $, which is analogous to the isometry identified in \cite{barbarino} between space of GLT sequences and space of measurable functions. Notice that in \cite{barbarino}, the author derived a metric space isometry. Here we achieved a Banach space isometric isomorphism.
	\section{Korovkin-Type Theorem}
	P. P Korovkin proved a classical approximation theorem in 1953, which unified several approximation process.  Korovkin-type theorems in the setting of Toeplitz operators acting on Hardy spaces and Fock spaces were obtained in \cite{rahul,vbkpreconditioner}.  Type 2 strong/weak cluster sense convergence was considered there.  Here we obtain an analogous result for GLT sequences. 
	
	Consider $M_n(\mathbb{C})$, with Frobenius norm  induced from the inner product $\langle A,B\rangle = $ trace$(B^*A)$. Let $\{U_n\}_n$ be sequence of unitary matrices such that each $U_n$ is of order $n$. For each $n$, define the subalgebra $M_{U_n}$ of $M_n(\mathbb{C})$  as
	$$M_{U_n}=\{A\in M_n(\mathbb{C}) : U_{n}^*AU_n \text{ is diagonal }\}.$$
	$M_{U_n}$ is a closed subspace of $M_n(\mathbb{C})$. We denote the orthogonal projection of $M_n(\mathbb{C})$ onto $M_{U_n}$ by $P_{U_n}(\cdot)$. It is known that $\|P_{U_n}(\cdot)\|=1$ when we consider $M_n(\mathbb{C})$ as a Banach space under usual operator norm (see \cite{vbkpreconditioner} for details). For $A\in M_n(\mathbb{C})$, $P_{U_n}(A)$ is called a preconditioner for $A$.
	
	Preconditioners play a crucial role in solving linear systems by iterative techniques.  They help to increase the convergence rate of iteration. For instance, consider the linear system with Toeplitz structure,
	$$T_n(f)x=b_n.$$
	For a fixed $f$, we can consider a sequence of Toeplitz matrices $\{T_n(f)\}_n$. If we can find a sequence of matrices $\{C_n(f)\}_n$ such that $\{C_n(f)-T_n(f)\}_n$ converges to $\{O_n\}_n$ in Type 2 strong/weak cluster sense, $\{C_n(f)\}_n$ can be considered as an efficient preconditioner \cite{rahulsurvey}. In this case, the eigenvalues of $C_n(f)^{-1}T_n(f)$ will be clustered at 1. This will help to improve the stability of the corresponding linear system.  In \cite{chan1992circulant} R. H Chan and M. C yeung proved that when $U_n=F_n$, the Fourier matrix of order $n$ and $f$ is a continuous function, then $\{P_{U_n}(T_n(f))-T_n(f)\}_n$ converges to $\{O_n\}_n$ in Type 2 strong cluster sense (corresponding preconditioners are known as circulant preconditioner). Depending on the choice of $U_n$, we can obtain other efficient preconditioners such as Hartley \cite{jin1994hartley}, Tau \cite{serra1999superlinear} etc. for Toeplitz matrices.
	
	Since linear systems involving GLT sequences appear at various situations, finding efficient preconditioners for GLT sequences is also an  important problem.\\
	Following is an example of an efficient preconditioner for GLT sequences.
	
	Consider a LT sequence $\{A_n\}_n\sim_{LT}a\otimes f$,  where $a$ is a Riemann integrable function on $[0,1]$ and $f$ is a continuous function on $[-\pi ,\pi]$. We give an example of preconditioner for this LT sequence and also  obtain a preconditioner for a GLT sequence. Let
	$$U_n=\begin{pmatrix}
		F_{\lfloor\frac{n}{m}\rfloor}&0&0&\cdots&0\\
		0&F_{\lfloor\frac{n}{m}\rfloor}&0&\cdots&0\\
		0&0&F_{\lfloor\frac{n}{m}\rfloor}&\cdots&0\\
		\vdots&\vdots&\vdots&\ddots&\vdots\\
		0&0&0&\cdots&F_{n(mod\, m)}	
	\end{pmatrix},$$
	where $F_n=\left(\frac{1}{\sqrt{n}}e^{\frac{2\pi ijl}{n}}\right)_{j,l=0}^{n-1}$ is the Fourier matrix of order $n$. Consider 
	$$LT_{n}^{m}(a,f)=D_m(a)\otimes T_{\lfloor\frac{n}{m}\rfloor}(f)\oplus O_{n(mod\, m)}.$$ 
	We can construct a matrix sequence $\{\tilde{A}_n\}_n$  which is an a.c.s limit for the sequence $\{\{LT_{n}^{m}(a,f)\}_n\}_m$ such that $\tilde{A}_n=LT_{n}^{m}(a,f)$ for some $m$ and $n\geq m^2$.  Now consider \\
	\begin{equation*}	
		\quad\begin{aligned}
			P_{U_n}(\tilde{A}_n)-\tilde{A}_n=
			\begin{pmatrix}
				a(\frac{1}{m})P_{F_k}(T_k(f))&0&0&\cdots&0\\
				0&	a(\frac{2}{m})P_{F_k}(T_k(f))&0&\cdots&0\\
				\vdots&\vdots&\ddots&\cdots&\vdots\\
				0&0&\cdots&	a(1)P_{F_k}T_k(f))&0\\
				0&0&\cdots&0&O_{n(mod\, m)}
			\end{pmatrix}\\
			-\begin{pmatrix}
				a(\frac{1}{m})T_k(f)&0&0&\cdots&0\\
				0&	a(\frac{2}{m})T_k(f)&0&\cdots&0\\
				\vdots&\vdots&\ddots&\cdots&\vdots\\
				0&0&\cdots&	a(1)T_k(f)&0\\
				0&0&\cdots&0&O_{n(mod\, m)}
			\end{pmatrix},
		\end{aligned}
	\end{equation*}
	where $k={\lfloor\frac{n}{m}\rfloor}$. Since $\{P_{F_k}(T_k(f))-T_k(f)\}_n$ converges to $\{O_n\}_n$ in Type 2 strong cluster sense, we can show that $\{P_{U_n}(\tilde{A}_n)-\tilde{A}_n\}_n$ converges to $\{O_n\}_n$ in Type 2 weak cluster sense. Since $q_w(\{A_n-\tilde{A}_n\}_n)=0, \{P_{U_n}(\tilde{A}_n)-A_n\}_n$ converges to $\{O_n\}_n$ in Type 2 weak cluster sense.
	\begin{remark}
		Note that $\{A_n\}_n$ in the above example need not be norm bounded.	If $\{A_n\}_n$ is norm bounded, then $\{P_{U_n}(A_n)-A_n\}_n$ converges to $\{O_n\}_n$ in Type 2 weak cluster sense (see Lemma \ref{qw pre}).
	\end{remark}
	Now consider a GLT sequence $\{B_n\}_n$ with symbol $\kappa$, such that  $\sum \limits_{i=1}^{k_m} a_{i,m} \otimes f_{i,m}$ converges to $\kappa$ in essential supremum norm (or in measure), where each $a_{i,m}$ is a Riemann integrable function on $[0,1]$ and $f_{im}$ is a continuous function on $[-\pi,\pi]$. Then $\{\{\sum\limits_{i=1}^{k_m}P_{U_n}(D_n(a_{i,m})T_n(f_{i,m}))\}_n\}_m$  converges to $\{B_n\}_n$ in $\tilde{\mathcal{G}}^{\infty} $ (or a.c.s for $\{B_n\}_n )$. Then we can construct a sequence $\{\hat{A}_n\}_n$ as in the proof of Theorem \ref{completeness} such that $\hat{A}_n=P_{U_n}(\sum\limits_{i=1}^{k_m}D_n(a_{i,m})T_n(f_{i,m}))$ for some $m$  (depends on $n$) and $q_w(\{\hat{A}_n\}_n-\{B_n\}_n)=0.$ Thus $\{\hat{A}_n\}_n$ is a good preconditioner for $\{B_n\}_n$.
	
	Since the convergence of $\{P_{U_n}(T_n(f))-T_n(f)\}_n$ to $\{O_n\}_n$  in Type 2 strong/weak cluster sense leads to efficient preconditioners, it is important to know when does this convergence holds. The Korovkin-type theorems obtained in \cite{vbkpreconditioner} reduces this task into a finite subset of the class of symbols. Here we obtain a similar result in the setting of GLT sequences. First, we prove a couple of lemmas.
	
	\begin{lemma}\label{F type 2}
		Suppose $\{A_n\}_n$ is a sequence of matrices of growing order such that $\|A_n\|\leq M<\infty$, for some $M>0$. Then $q_w(\{A_n\}_n)=0$ if and only if $\|A_n\|_{F}^{2}=o(n)$.
	\end{lemma}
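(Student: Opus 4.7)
The plan is to combine the singular-value characterisation of $q_w$ from Theorem \ref{sigma} with the Frobenius-norm criterion from Lemma \ref{frobenius}. Recall that by Theorem \ref{sigma},
$$q_w(\{A_n\}_n)=0 \iff \text{for every } \varepsilon>0,\ \frac{\#\{i:\sigma_i(A_n)\geq\varepsilon\}}{n}\to 0,$$
and this is exactly the statement that $\{A_n\}_n\to\{O_n\}_n$ in the Type 2 weak cluster sense (Corollary \ref{qw} together with Remark \ref{sigma clustering}).

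\textbf{Reverse direction.} Suppose $\|A_n\|_F^2=o(n)$. Apply Lemma \ref{frobenius} with $B_n=O_n$ to conclude $\{A_n\}_n\to\{O_n\}_n$ in the Type 2 weak cluster sense, and then invoke Corollary \ref{qw} to obtain $q_w(\{A_n\}_n)=0$. This direction does not use the uniform bound.

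\textbf{Forward direction.} Assume $q_w(\{A_n\}_n)=0$ and $\|A_n\|\leq M$ for all $n$. Fix $\varepsilon>0$. By the singular-value characterisation above, $k_n(\varepsilon):=\#\{i:\sigma_i(A_n)\geq \varepsilon\}=o(n)$. Split the Frobenius sum according to whether $\sigma_i(A_n)\geq\varepsilon$:
$$\|A_n\|_F^2=\sum_{\sigma_i(A_n)\geq\varepsilon}\sigma_i(A_n)^2+\sum_{\sigma_i(A_n)<\varepsilon}\sigma_i(A_n)^2\leq M^2\,k_n(\varepsilon)+\varepsilon^2 n,$$
since every singular value is bounded above by $\|A_n\|\leq M$. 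Dividing by $n$ and letting $n\to\infty$ gives $\limsup_n \|A_n\|_F^2/n\leq \varepsilon^2$. As $\varepsilon>0$ was arbitrary, $\|A_n\|_F^2/n\to 0$, i.e.\ $\|A_n\|_F^2=o(n)$.

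The only subtle point is why the uniform bound $\|A_n\|\leq M$ is genuinely needed in the forward direction: without it, a handful of very large singular values (say one of size $\sqrt{n}$) would contribute $\Theta(n)$ to the Frobenius norm while still leaving $k_n(\varepsilon)=o(n)$, so the implication $q_w=0\Rightarrow\|A_n\|_F^2=o(n)$ would fail. The bound $M$ is exactly what turns $o(n)$ in the singular-value count into $o(n)$ in the squared Frobenius norm, and this is the only real work in the proof.
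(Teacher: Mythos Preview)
Your proof is correct and the overall strategy matches the paper's: the reverse direction is identical (Lemma~\ref{frobenius} followed by Corollary~\ref{qw}), and in the forward direction both arguments split the singular values into a ``large'' group controlled by the uniform bound $M$ and a ``small'' group controlled by $\varepsilon$.

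The one genuine difference is the tool used to locate the splitting point. The paper passes from $q_w=0$ to $d_{acs}(\{A_n\}_n,\{O_n\}_n)=0$ via Theorem~\ref{thm1}, then invokes the formula $P(A_n)=\min_i\{i/n+\sigma_{i+1}(A_n)\}$ of Theorem~\ref{p(a)} to produce, for each $n$, an index $j$ with $j/n<\varepsilon$ and $\sigma_{j+1}(A_n)<\varepsilon$. You instead read off $k_n(\varepsilon)=\#\{i:\sigma_i(A_n)\geq\varepsilon\}=o(n)$ directly from the singular-value characterisation in Theorem~\ref{sigma}. Your route is a bit shorter and avoids the detour through $P(A_n)$; the paper's route has the minor advantage of giving an explicit index $j$ with both $j/n$ and $\sigma_{j+1}$ below $\varepsilon$ simultaneously, though that extra information is not actually needed here. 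Either way the final estimate $\|A_n\|_F^2/n\leq M^2\cdot o(1)+\varepsilon^2$ is the same.
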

	\begin{proof}
		If $\|A_n\|_{F}^2=o(n)$, then by Lemma \ref{frobenius}, $\{A_n\}_n$ converges to $\{O_n\}_n$ in Type 2 weak cluster sense. Then $q_w(\{A_n\}_n)=0$.\\
		Conversely assume that $q_w(\{A_n\}_n)=0$. Then $\{A_n\}_n$ converges to $\{O_n\}_n$ in Type 2 weak cluster sense. Using Theorem \ref{thm1}, we get  $\displaystyle \lim_{n\to \infty}P(A_n)=0.$ Also by Theorem \ref{p(a)}, we have 
		
		$$P(A_n)=\min_{i=1,2,\ldots,n}\left\{\frac{i}{n}+\sigma_{i+1}(A_n)\right\}.$$
		Since $\displaystyle \lim_{n\to \infty}P(A_n)=0,$ for $\epsilon >0,$ there exists a positive integer $n_\epsilon$ such that for all $n\geq n_\epsilon,\,  P(A_n)<\epsilon.$ Hence we have for $n\geq n_\epsilon$,
		$$\min_{i=1,2,\ldots,n}\left\{\frac{i}{n}+\sigma_{i+1}(A_n)\right\}<\epsilon.$$
		Then there exist a $j$, such that $\frac{j}{n}+\sigma_{j+1}(A_n)<\epsilon.$
		Now $\displaystyle \frac{1}{n}\left(\sum_{i=1}^{j}{\sigma_i}^2\right)< M^2\epsilon$ and $\displaystyle\sum_{i=j+1}^{n}{\sigma_i}^2<{\epsilon}^2(n-j).$ Then,
		$$\|A_n\|_{F}^2=\sum_{i=1}^{n}{\sigma_i}^2(A)<M^2\epsilon +(n-j)\epsilon^2 \text{ and }
		\frac{\|A_n\|_{F}^2}{n}<\frac{M^2\epsilon}{n}+{\epsilon}^2,\,\,\forall n\geq n_\epsilon.$$
		Hence $\|A_n\|_{F}^{2}=o(n).$
	\end{proof}
	\begin{lemma}\label{qw pre}
		Let	$\{A_n\}_n$ be a sequence of matrices such that for each $n, \|A_n\|\leq M <\infty$. If $q_w(\{A_n\}_n)=0,$ then $q_w(\{P_{U_n}(A_n)\}_n)=0.$
	\end{lemma}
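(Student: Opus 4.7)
The plan is to route the problem through Lemma~\ref{F type 2}, which converts $q_w$-vanishing into a Frobenius-norm decay condition for norm-bounded sequences. Since $P_{U_n}$ is an orthogonal projection with respect to the Frobenius inner product, it can only shrink the Frobenius norm; and since $\|P_{U_n}(\cdot)\|=1$ in operator norm, it preserves norm-boundedness. Combining these two observations should close the loop.

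More concretely, first I would use the hypothesis $\|A_n\|\leq M$ together with $q_w(\{A_n\}_n)=0$ and apply Lemma~\ref{F type 2} (forward direction) to deduce
\[
\|A_n\|_F^2 = o(n).
\]
Next, because $P_{U_n}$ is the orthogonal projection of $M_n(\mathbb{C})$ onto the subalgebra $M_{U_n}$ with respect to the Frobenius inner product $\langle A,B\rangle=\operatorname{trace}(B^*A)$, it is a contraction in Frobenius norm, so
\[
\|P_{U_n}(A_n)\|_F \leq \|A_n\|_F,
\]
which yields $\|P_{U_n}(A_n)\|_F^2 = o(n)$ as well. Simultaneously, since $\|P_{U_n}(\cdot)\|=1$ with respect to the operator norm (as recalled before this lemma), we have $\|P_{U_n}(A_n)\|\leq \|A_n\|\leq M$, so $\{P_{U_n}(A_n)\}_n$ is itself norm-bounded.

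Having both norm-boundedness and $\|P_{U_n}(A_n)\|_F^2 = o(n)$ in hand, I would invoke Lemma~\ref{F type 2} in the reverse direction to conclude $q_w(\{P_{U_n}(A_n)\}_n)=0$. I do not expect a genuine obstacle here: the only point requiring a brief justification is that $P_{U_n}$ contracts the Frobenius norm, which is immediate from its definition as an orthogonal projection in that inner product. The essential content of the lemma is really the compatibility between the two different norms controlling $q_w$ (operator and Frobenius) together with the two distinct norm properties of $P_{U_n}$, and Lemma~\ref{F type 2} is precisely the bridge that makes these compatible.
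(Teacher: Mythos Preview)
Your proposal is correct and follows essentially the same route as the paper: apply Lemma~\ref{F type 2} to get $\|A_n\|_F^2=o(n)$, use that $P_{U_n}$ is a Frobenius-norm contraction to obtain $\|P_{U_n}(A_n)\|_F^2=o(n)$, and conclude via Lemma~\ref{F type 2} again. If anything, you are slightly more careful than the paper in explicitly noting that $\{P_{U_n}(A_n)\}_n$ is norm-bounded before re-invoking Lemma~\ref{F type 2}.
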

	\begin{proof}
		Given $q_w(\{A_n\}_n)=0.$  Then by Lemma \ref{F type 2}, $\|A_n\|_{F}^2=o(n).$ Since  $\|P_{U_n}\|_F=1$, $\|P_{U_n}(A_n)\|_{F}^2\leq \|A_n\|_{F}^2=o(n).$  Thus $q_w(\{P_{U_n}(A_n)\}_n)=0.$
	\end{proof}
	Now we present the Korovkin-type theorem in the setting of GLT sequences. Here we obtain preconditioners for the norm bounded GLT sequences. For arbitrary GLT sequences, see the corollary \ref{corkorvkin}.
	\begin{theorem}\label{korovkin}
		Let $\{f_1,f_2,$\ldots$,f_k\}\subseteq L^{\infty}(D)$ and $\{A_n(f_i)\}_n\sim_{GLT}f_i$ is norm bounded for each $f_i$. Suppose that $\{P_{U_n}(A_n(g))-A_n(g)\}$ converges to $\{O_n\}$ in Type 2 weak cluster sense for $g\in\{f_1,f_2,$\ldots$,f_k,\sum_{i=1}^{k}f_if_i^*\}$ . Then for every $f$ in the $C^*$-algebra generated by $\{f_1,f_2,\ldots,f_k\}$, with $\{A_n(f)\}_n\sim_{GLT}f$ is norm bounded, $\{P_{U_n}(A
		_n(f))-A_n(f)\}$ converges to $\{O_n\}$ in Type 2 weak cluster sense. 
	\end{theorem}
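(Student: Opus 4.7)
The strategy is to define $\mathcal{S}\subseteq L^\infty(D)$ as the set of symbols $f$ for which the theorem's conclusion holds, and to show that $\mathcal{S}$ is a norm-closed $\ast$-subalgebra of $L^\infty(D)$. Since the hypothesis puts $f_1,\ldots,f_k\in\mathcal{S}$, the $C^\ast$-algebra they generate must then lie in $\mathcal{S}$. Throughout, I would reformulate convergence in Frobenius terms: by Lemma~\ref{F type 2}, for a norm-bounded GLT representative $\{A_n(f)\}_n$ the preconditioner convergence is equivalent to $\|P_{U_n}(A_n(f))-A_n(f)\|_F^2=o(n)$.

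First I would verify that $\mathcal{S}$ is well-defined (independent of the norm-bounded representative) and is a $\ast$-vector space. Well-definedness follows from Lemma~\ref{qw pre} combined with Lemma~\ref{F type 2}: two norm-bounded GLT representatives of the same symbol differ by a norm-bounded sequence lying in $Z$, whose $P_{U_n}$-image is again in $Z$, so both $\|A_n(f)-A_n'(f)\|_F$ and $\|P_{U_n}(A_n(f)-A_n'(f))\|_F$ are $o(\sqrt n)$. Vector-space closure follows from linearity of $P_{U_n}$, the triangle inequality for $\|\cdot\|_F$, and GLT property~1(b). Closure under complex conjugation uses $P_{U_n}(A^\ast)=P_{U_n}(A)^\ast$ (since $M_{U_n}$ is $\ast$-closed) with the $\ast$-invariance of $\|\cdot\|_F$ and GLT property~1(a).

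The main step is multiplicative closure. A key observation is that $P_{U_n}$ is a unital conditional expectation onto the abelian $\ast$-subalgebra $M_{U_n}$ — the bimodule property $P_{U_n}(B_1 A B_2)=B_1 P_{U_n}(A) B_2$ for $B_1,B_2\in M_{U_n}$ is a direct verification — so it satisfies the identity
\[
P_{U_n}(XY)-P_{U_n}(X)P_{U_n}(Y)=P_{U_n}\bigl((X-P_{U_n}(X))(Y-P_{U_n}(Y))\bigr).
\]
Writing $P_{U_n}(A_n(f)A_n(g))-A_n(f)A_n(g)=T_1+T_2$ with $T_1$ the right-hand side of this identity (for $X=A_n(f)$, $Y=A_n(g)$) and $T_2=(P_{U_n}(A_n(f))-A_n(f))P_{U_n}(A_n(g))+A_n(f)(P_{U_n}(A_n(g))-A_n(g))$, I would estimate using Frobenius contractivity $\|P_{U_n}(\cdot)\|_F\leq\|\cdot\|_F$, the bound $\|XY\|_F\leq\|X\|_F\|Y\|_{\mathrm{op}}$, and operator-norm boundedness of all sequences (using $\|P_{U_n}\|_{\mathrm{op}}=1$) to get $\|T_1\|_F,\|T_2\|_F=o(\sqrt n)$. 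Together with GLT property~1(c), which furnishes $\{A_n(f)A_n(g)\}_n\sim_{GLT} fg$ as a norm-bounded representative, and the established well-definedness, this proves $fg\in\mathcal{S}$. Alternatively, the hypothesis on $\sum_i f_if_i^\ast$ supports a Kadison--Schwarz route: the positive defects $D_n^{(i)}=P_{U_n}(A_n(f_i)A_n(f_i)^\ast)-P_{U_n}(A_n(f_i))P_{U_n}(A_n(f_i))^\ast\geq 0$ sum to a sequence clustering at zero in Type 2 weak sense, and nonnegativity forces each $\{D_n^{(i)}\}_n$ to cluster at zero, yielding the same conclusion.

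Finally I would upgrade the polynomial $\ast$-algebra to its $C^\ast$-closure. For $f\in C^\ast(f_1,\ldots,f_k)$, take $\ast$-polynomials $f^{(j)}\to f$ in $L^\infty$; GLT property~1(b) yields $\{A_n(f)-A_n(f^{(j)})\}_n\sim_{GLT} f-f^{(j)}$, and the singular value distribution applied to a compactly supported truncation of $F(x)=x^2$ (valid by norm boundedness) gives $\tfrac1n\|A_n(f)-A_n(f^{(j)})\|_F^2\to\tfrac1{2\pi}\|f-f^{(j)}\|_2^2\leq C\|f-f^{(j)}\|_\infty^2$. The split
\[
\|P_{U_n}(A_n(f))-A_n(f)\|_F\leq 2\|A_n(f)-A_n(f^{(j)})\|_F+\|P_{U_n}(A_n(f^{(j)}))-A_n(f^{(j)})\|_F,
\]
using $\|P_{U_n}(\cdot)\|_F\leq\|\cdot\|_F$ together with $f^{(j)}\in\mathcal{S}$, then yields $\|P_{U_n}(A_n(f))-A_n(f)\|_F/\sqrt n\leq C\|f-f^{(j)}\|_\infty+o(1)$; sending first $n\to\infty$ then $j\to\infty$ concludes. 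The main obstacle I anticipate is the bookkeeping in the product step — ensuring every intermediate matrix sequence remains norm bounded so that Lemma~\ref{F type 2} and the conditional-expectation identity apply uniformly through an inductive degree argument.
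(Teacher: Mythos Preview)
Your argument is correct, but it departs from the paper's proof in two places. For the $*$-algebra step the paper simply invokes the standard Korovkin machinery from \cite{rahul}, which uses the extra test function $\sum_i f_if_i^*$ through a Schwarz/positivity argument; you instead exploit that $P_{U_n}$ is a genuine conditional expectation onto the commutative algebra $M_{U_n}$, deriving the identity $P_{U_n}(XY)-P_{U_n}(X)P_{U_n}(Y)=P_{U_n}\bigl((X-P_{U_n}X)(Y-P_{U_n}Y)\bigr)$ and estimating directly in Frobenius norm. This is a strictly stronger observation: it shows that under norm-boundedness the hypothesis on $\sum_i f_if_i^*$ is actually redundant, whereas the cited Korovkin-type argument needs it because it only uses positivity and unitality of $P_{U_n}$. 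For the closure step the paper works entirely with the seminorm $q_w$, splitting $A_n(g_t)-A_n(g)=R_{n,t}+N_{n,t}$ and appealing to the isometry $\tilde{\mathcal G}^\infty\cong L^\infty(D)$ of Theorem~\ref{isometry} together with Lemma~\ref{qw pre} and the operator-norm contractivity of $P_{U_n}$; you stay in the Frobenius picture, using the singular-value distribution of $\{A_n(f)-A_n(f^{(j)})\}_n$ to control $n^{-1}\|A_n(f)-A_n(f^{(j)})\|_F^2$ by $\|f-f^{(j)}\|_\infty^2$. Both routes are valid; yours is more self-contained and yields a mild strengthening, while the paper's makes the role of the $q_w$-$L^\infty$ isometry (the article's main result) explicit in the application.
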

	\begin{proof}
		There is a standard procedure to obtain convergence of $\{P_{U_n}(A_n(f))-A_n(f)\}_n$ with $f$ in the *-algebra generated by $\{f_1,f_2,$\ldots$,f_k\}$. We can follow the same procedure here (see the proof Theorem 3.4 in \cite{rahul}). From the *-algebra to reach $C^*$-algebra (that is the closure in the $C^*$-algebra norm), we proceed as follows.
		
		Let $g$ belongs to $ C^*$-algebra generated by $\{f_1,f_2,\cdots,f_k\}$ and  $\{g_m\}$ be a sequence  which converges to $g$. Then by Theorem \ref{isometry} there exist norm bounded GLT sequences $\{A_n(g_m)\}n$ and $\{A_n(g)\}_n$ corresponding to each $g_m$ and $g$ respectively, such that $\{\{A_n(g_m)\}_n\}_m$ converges to  $\{A_n(g)\}_n$	in $\tilde{\mathcal{G}}^{\infty}$.
		
		Therefore, for $\epsilon >0$, there exists a positive integer $t$ such that $q_w(\{A_n(g_t)-A_n(g)\}_n)<{\epsilon/2}.$ Then there exist two norm bounded sequences $\{R_n\}_{n,t}$ and $\{N_n\}_{n,t}$ such that $$A_n(g_t)-A_n(g)=R_{n,t}+N_{n,t},\quad \lim_{n\to\infty}\frac{rank(R_{n,t})}{n}=0,\quad \|N_{n,t}\|<\epsilon/2.$$
		Now consider
		\begin{equation*}
			\begin{aligned}
				q_w(\{P_{U_n}(A_n(g))-A_n(g)\}_n)&=q_w(\{P_{U_n}(A_n(g))-P_{U_n}(A_n(g_t))\\
				&\quad+P_{U_n}(A_n(g_t))-A_n(g_t)+A_n(g_t)-A_n(g)\}_n)\\
				&\leq q_w(\{P_{U_n}(A_n(g)-A_n(g_t))\}_n)\\
				&\quad+q_w(\{P_{U_n}(A_n(g_t))-A_n(g_t)\}_n)\\
				&\quad+q_w(\{A_n(g_t)-A_n(g)\})_n).
			\end{aligned}
		\end{equation*}
		The second term  on the right hand side is zero and $q_w(\{A_n(g_t)-A_n(g)\})_n)<\epsilon/2.$ Now consider the first term on the right hand side
		\begin{equation*}
			\begin{aligned}
				q_w(\{P_{U_n}(A_n(g)-A_n(g_t))\}_n)&=q_w(\{P_{U_n}(R_{n,t}+N_{n,t})\}_n)\\
				&\leq q_w(\{P_{U_n}(R_{n,t})\}_n)+q_w(\{P_{U_n}(N_{n,t})\}).\\
			\end{aligned}
		\end{equation*}
		Since $q_w(\{R_{n,t}\}_n)=0$, by Lemma \ref{qw pre}, $q_w(\{P_{U_n}(R_{n,t})\}_n)=0.$ Also we know that $\|P_{U_n}\|=1$, then $\|P_{U_n}(N_{n,t})\|\leq \|N_{n,t}\|$ and hence $q_w(\{P_{U_n}(N_{n,t})\}_n)<\epsilon/2$. Thus $q_w(\{P_{U_n}(A_n(g))-A_n(g)\}_n)<\epsilon$. Hence $\{P_{U_n}(A_n(g))-A_n(g)\}_n$ converges to $\{O_n\}_n$ in Type 2 weak cluster sense.
	\end{proof}
	\begin{cor}\label{corkorvkin}
		Under the conditions of Theorem \ref{korovkin}, if  $ \sum_{i=1}^{k_m}g_{i,m}$ converges to $g$ in measure, where each $g_{i,m}$ belong to  $C^*$- algebra generated by $\{f_1,f_2,\ldots,f_k\}$, then we can extract a preconditioner sequence $\{P_{U_n}(A_n(h_i))\}_n$ for $\{A_n(g)\}_n$. 
	\end{cor}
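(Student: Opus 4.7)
The plan is to combine the GLT structure theorem with the norm-bounded Korovkin result, Theorem \ref{korovkin}, by way of a diagonal extraction in the spirit of the completeness proof of Theorem \ref{completeness}. First I would invoke Definition \ref{GLT defn} together with the surjectivity property (\ref{tog2}): each $g_{i,m}$ lies in the $C^*$-algebra generated by $\{f_1,\ldots,f_k\}\subseteq L^\infty(D)$ and is therefore essentially bounded, so we may fix a norm-bounded GLT sequence $\{A_n(g_{i,m})\}_n\sim_{GLT} g_{i,m}$. Set $B_{n,m}=\sum_{i=1}^{k_m} A_n(g_{i,m})$. Because $\sum_i g_{i,m}\to g$ in measure, the family $\{\{B_{n,m}\}_n\}_m$ is an a.c.s. for any representative $\{A_n(g)\}_n\sim_{GLT} g$. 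Via Lemma \ref{lemma1} and Corollary \ref{qw}, this a.c.s. convergence supplies, for each $m$, a decomposition $A_n(g)-B_{n,m}=R_{n,m}+N_{n,m}$ with $\mathrm{rank}(R_{n,m})/n\to 0$ as $n\to\infty$ and $\limsup_n\|N_{n,m}\|\to 0$ as $m\to\infty$.

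Next I would apply Theorem \ref{korovkin} term by term. Since each $g_{i,m}$ belongs to the $C^*$-algebra generated by $\{f_1,\ldots,f_k\}$ and $\{A_n(g_{i,m})\}_n$ is norm bounded, the theorem gives $q_w(\{P_{U_n}(A_n(g_{i,m}))-A_n(g_{i,m})\}_n)=0$. Summing the finitely many indices $i$ for a fixed $m$, and using linearity of $P_{U_n}$ together with the subadditivity of $q_w$ established in Theorem \ref{completeness}, I obtain $q_w(\{P_{U_n}(B_{n,m})-B_{n,m}\}_n)=0$ for every $m$.

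Finally, I would splice the two estimates through the triangle inequality
\begin{equation*}
q_w(\{P_{U_n}(B_{n,m})-A_n(g)\}_n)\leq q_w(\{P_{U_n}(B_{n,m})-B_{n,m}\}_n)+q_w(\{B_{n,m}-A_n(g)\}_n),
\end{equation*}
whose right-hand side tends to $0$ as $m\to\infty$. A diagonal extraction identical to the one used in the completeness argument of Theorem \ref{completeness} (and already employed in the LT/GLT preconditioner construction immediately preceding this corollary) then yields a single sequence $\hat A_n=P_{U_n}(B_{n,m(n)})=\sum_i P_{U_n}(A_n(g_{i,m(n)}))$ with $q_w(\{\hat A_n-A_n(g)\}_n)=0$. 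Relabelling the functions $g_{i,m(n)}$ as the $h_i$ appearing in the statement produces the desired preconditioner $\{P_{U_n}(A_n(h_i))\}_n$ for $\{A_n(g)\}_n$, whose difference with $\{A_n(g)\}_n$ is $\{O_n\}_n$ in Type 2 weak cluster sense by Corollary \ref{qw}.

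The hard part will be the diagonal extraction in the last step, because $\{A_n(g)\}_n$ is not assumed norm bounded and so Lemma \ref{F type 2} (reduction of $q_w=0$ to a Frobenius estimate) is unavailable. The extraction must be carried out purely at the intrinsic level of the pseudometric $q_w$, tracking the rank-error $\{R_{n,m}\}$ from Step 1 and the vanishing $q_w$-error from Step 2 along the same diagonal indices $m(n)$. The completeness machinery built in Theorem \ref{completeness} provides exactly the framework required, and the extraction transcribes verbatim from the preceding LT/GLT preconditioner example.
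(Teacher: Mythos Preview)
Your overall architecture matches the paper's, but there is a genuine gap in the passage from a.c.s.\ convergence to $q_w$-convergence. When you write that the a.c.s.\ convergence ``supplies, for each $m$, a decomposition $A_n(g)-B_{n,m}=R_{n,m}+N_{n,m}$ with $\mathrm{rank}(R_{n,m})/n\to 0$ as $n\to\infty$,'' this is not what a.c.s.\ gives: the a.c.s.\ splitting only guarantees $\mathrm{rank}(R_{n,m})/n\le c(m)$ with $c(m)\to 0$ as $m\to\infty$, not that the rank ratio vanishes in $n$ for fixed $m$. Consequently the second term in your triangle inequality, $q_w(\{B_{n,m}-A_n(g)\}_n)$, need not tend to $0$; the example immediately after Theorem~\ref{thm2} exhibits a sequence that converges a.c.s.\ to $\{O_n\}_n$ while $q_w$ remains equal to $1$ for every $m$. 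Since $g$ is only assumed measurable and $\sum_i g_{i,m}\to g$ merely in measure, the GLT machinery yields only a.c.s.\ convergence of $\{\{B_{n,m}\}_n\}_m$ to $\{A_n(g)\}_n$, and Theorem~\ref{thm2} makes clear this is strictly weaker than $q_w$-convergence. So the difficulty is not where you locate it (the diagonal extraction) but one step earlier.

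The repair is immediate and is precisely what the paper does: run the triangle inequality in $d_{acs}$ rather than in $q_w$. Your Step~2 already gives $q_w(\{P_{U_n}(B_{n,m})-B_{n,m}\}_n)=0$, hence $d_{acs}(\{P_{U_n}(B_{n,m})\}_n,\{B_{n,m}\}_n)=0$ by Corollary~\ref{qw} and Theorem~\ref{thm1}. Combined with the a.c.s.\ convergence $\{B_{n,m}\}_n\to\{A_n(g)\}_n$, this shows that $\{\{P_{U_n}(B_{n,m})\}_n\}_m$ is itself an a.c.s.\ for $\{A_n(g)\}_n$. The diagonal extraction is then carried out in the complete pseudometric $d_{acs}$ (the completeness of $(\tilde{\mathcal{E}},\tilde d_{acs})$ from \cite{barbarino,garoni1}), not in $q_w$, and produces $\hat A_n=P_{U_n}(B_{n,m(n)})$ with $d_{acs}(\{\hat A_n\}_n,\{A_n(g)\}_n)=0$, i.e.\ the required Type~2 weak-cluster convergence.
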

	\begin{proof}
		We have  $ \sum_{i=1}^{k_m}g_{i,m}$ belongs to $ C^*\{f_1,f_2,\cdots,f_m\}$. Hence $\{P_{U_n}(A_n(\sum_{i=1}^{k_m}g_{i,m}))-A_n(\sum_{i=1}^{k_m}g_{i,m})\}_n$ converges to $\{O_n\}_n$ in Type 2 weak cluster sense if $A_n(\sum_{i=1}^{k_m}g_{i,m})$ is of bounded norm and $A_n(\sum_{i=1}^{k_m}g_{i,m})\sim_{GLT}\sum_{i=1}^{k_m}g_{i,m}$. Since $\sum_{i=1}^{k_m}g_{i,m}$ converges to $g$ in measure,  $A_n(\sum_{i=1}^{k_m}g_{i,m})$ and $\{P_{U_n}(A_n(\sum_{i=1}^{k_m}g_{i,m}))$ are   a.c.s for $A_n(g)$. We can construct a sequence of the form $\{P_{U_n}(A_n(h_j))\}_n$ which is an a.c.s limit for $\{P_{U_n}(A_n(\sum_{i=1}^{k_m}g_{i,m}))\}_n$. Hence the result follows.
	\end{proof}
	\begin{remark}
		Theorem \ref{korovkin} need not hold if $\{A_n(f)\}$ is not norm bounded. Let $f\in L^\infty(D)$ and consider a GLT sequence $\{A_n(f)\}_n$ such that $\|A_n(f)\|\leq M<\infty,$ for all $n$. Let, $q_w(\{P_{U_n}(A_n(f)-A_n(f))\}_n=0$ and $\{B_n\}_n$ be another sequence such that $q_w(\{B_n-A_n(f)\}_n)=0.$ But if $\|B_n\|$ is unbounded, then $q_w(\{P_{U_n}(B_n)-B_n\}_n)$ need not be zero. For if, consider $B_n=A_n(f)+Z_n$, where $U_n^*Z_nU_n=(a_{ij})_{i,j=1}^{n}, a_{ij}=1$ for all $1\leq i,j\leq n$. Clearly $q_w(\{P_{U_n}(B_n)-B_n\}_n)\neq0.$ But $q_w(\{P_{U_n}(A_n(f))-B_n\})=0.$ So we can treat $P_{U_n}(A_n(f))$ as a preconditioner for $\{B_n\}_n$. Also note that the function $g$ in corollary \ref{corkorvkin} need not be essentially bounded.	
	\end{remark}
	\section{Concluding Remarks}	
	As we know, the theory of Toeplitz matrix sequences has a rich operator theoretic analogue on the Hardy space via the symbol-function. There are variations of it into Bergman space, Fock space, etc. We expect such versions in the case of GLT sequences also. The development must be through the identification of corresponding symbols.
	The major achievement of this article is that we are able to identify the connection between the space of symbols and the subspaces of GLT sequences. We hope that these identifications will be useful in establishing the operator theoretic analogue of the spectral distributional results of such matrix sequences. Korovkin-type result we obtained in this article makes use of a topology on the space of GLT sequences. The connection with the topologies on $B(\mathbb{H})$ and the topologies introduced in this article would be another interesting point. Obtaining the convergence in eigenvalue clustering as convergence with respect to some topology on $B(\mathbb{H})$ is the main goal in our future research.
	\section*{Acknowledgments}
	V. B. Kiran Kumar is thankful to KSCSTE, Kerala for financial support through the KSYSA-Research Grant. Rahul Rajan is thankful to Cochin University of Science and Technology for the financial support under the University post doctoral fellowship. N. S. SarathKumar is supported by CSIR-
	JRF.


\begin{thebibliography}{20}
		
		\bibitem{barbarino}
		G.~Barbarino,
		\newblock {\em Equivalence between GLT sequences and measurable functions},
		\newblock {Linear Algebra Appl}, 529:397--412, 2017.
		
		\bibitem{garoni}
		G.~Barbarino and C.~Garoni,
		\newblock {\em From convergence in measure to convergence of matrix-sequences
			through concave functions and singular values},
		\newblock {Electron. J. Linear Algebra }, 32:500--513, 2017.
		
		\bibitem{stefano1}
		G.~Barbarino, C.~Garoni, S.~Serra-Capizzano, et~al,
		\newblock {\em Block generalized locally Toeplitz sequences: Theory and
			applications in the unidimensional case},
		\newblock {Electron. Trans. Numer. Anal.}, 53:28–112,
		2020.
		
		\bibitem{bottcher}
		A.~B{\"o}ttcher, C.~Garoni, and S.~Serra-Capizzano,
		\newblock {\em Exploration of Toeplitz-like matrices with unbounded symbols is not
			a purely academic journey},
		\newblock{Sb. Math.}, 208(11):1602, 2017.
		
		\bibitem{chan1992circulant}
		R.~H. Chan and M.~C. Yeung,
		\newblock {\em Circulant preconditioners for Toeplitz matrices with positive
			continuous generating functions,}
		\newblock {Math. Comp.}, 58(197):233--240, 1992.
		
		\bibitem{garoni1}
		C.~Garoni,
		\newblock {\em Topological foundations of an asymptotic approximation theory for
			sequences of matrices with increasing size},
		\newblock {Linear Algebra Appl}, 513:324--341, 2017.
		
		\bibitem{stefano}
		C.~Garoni and S.~Serra-Capizzano,
		\newblock {\em {Generalized Locally Toeplitz sequences: Theory and
				applications}}, volume~1.
		\newblock Springer, 2017.
		
		\bibitem{szego}
		U.~Grenander and G.~Szeg{\"o},
		\newblock {\em Toeplitz Forms and Their Applications 2nd edn (New York: Chelsea)}.
		\newblock 1984.
		
		\bibitem{jin1994hartley}
		X.-Q. Jin,
		\newblock {\em Hartley preconditioners for Toeplitz systems generated by positive
			continuous functions,}
		\newblock { BIT}, 34(3):367--371, 1994.
		
		\bibitem{rahul}
		V.~B.~Kiran Kumar, M.N.N. Namboodiri, and R.~Rajan,
		\newblock {\em A Korovkin-type theory for non self-adjoint Toeplitz operators},
		\newblock {Linear Algebra Appl}, 543:140--161, 2018.
		
		\bibitem{rahulsurvey}
		V.~B.~Kiran Kumar, M.N.N. Namboodiri, and R.~Rajan,
		\newblock {\em A short survey on preconditioners and Korovkin-type theorems,}
		\newblock {J. Anal.}, 29(2):425--447, 2021.
		
		\bibitem{vbkpreconditioner}
		V.~B.~Kiran Kumar, M.N.N. Namboodiri, and S.~Serra-Capizzano,
		\newblock {\em Preconditioners and Korovkin-type theorems for infinite-dimensional
			bounded linear operators via completely positive maps},
		\newblock {Studia Math}, 218(2):95--118, 2013.
		
		\bibitem{stefano3}
		S.~Serra-Capizzano,
		\newblock {\em A Korovkin-type theory for finite Toeplitz operators via matrix
			algebras},
		\newblock {Numer. Math.}, 82(1):117--142, 1999.
		
		\bibitem{serra1999superlinear}
		S.~Serra-Capizzano,
		\newblock {\em Superlinear PCG methods for symmetric Toeplitz systems},
		\newblock { Math. Comp.}, 68(226):793--803, 1999.
		
		\bibitem{tyrtyshnikov1996unifying}
		E.~E. Tyrtyshnikov,
		\newblock {\em A unifying approach to some old and new theorems on distribution and
			clustering,}
		\newblock { Linear Algebra Appl}, 232:1--43, 1996.
		
	\end{thebibliography}
\end{document}